

\documentclass{birkau}
\usepackage{amsmath,amssymb,latexsym,url}
\numberwithin{equation}{section}

\theoremstyle{plain}
\newtheorem{theorem}{Theorem}[section]
\newtheorem{lemma}[theorem]{Lemma}

\newtheorem{corollary}[theorem]{Corollary}

\theoremstyle{definition}

\newtheorem{remark}[theorem]{Remark}

\RequirePackage[mathscr]{euscript}
\usepackage{xcolor}
\usepackage{enumerate}

\usepackage{comment}

\usepackage{tikz}
\usetikzlibrary{arrows,calc,through,intersections,backgrounds,trees,positioning}
\usetikzlibrary{decorations,decorations.pathreplacing,cd,shapes.geometric}

\newcommand{\Con}[1]{\alg{Con}\;\alg #1}        

\newcommand{\iso}{\DOTSB\cong}                  
\newcommand{\comp}{\mathbin\circ}               
\newcommand{\onto}{\twoheadrightarrow}

\DeclareRobustCommand\emslb{\bfseries\slshape}

\DeclareTextFontCommand{\emphslb}{\emslb}

\newcommand{\HH}{\textit{\emphslb{H}}}
\renewcommand{\SS}{\textit{\emphslb{S}}}
\newcommand{\PP}{\textit{\emphslb{P}}}
\newcommand{\VV}{\textit{\emphslb{V}}}

\newcommand{\cov}{\prec}                
\def\join{\DOTSB\vee}                   


\newcommand{\meet}{\DOTSB\wedge}

\newcommand{\la}{\langle}            
\newcommand{\ra}{\rangle}            

\newcommand{\alg}[1]{\mathbf{#1}}
\newcommand{\op}{\operatorname}


\begin{document}
\date{Nov 13, 2023}
\title{Finitely Based Congruence Varieties}
\corrauthor{Ralph Freese}
\address{Department of Mathematics\\
University of Hawaii\\Honolulu, HI 96822\\USA}
\urladdr{http://math.hawaii.edu/~ralph}
\email{ralph@math.hawaii.edu}

\author{Paolo Lipparini}
\address{Department of Mathematics\\University of Tor Vergata\\Viale della Ricerca Scientivica\\I-00133 Rome\\ITALY}
\email{lipparin@axp.mat.uniroma2.it}

\dedicatory{Dedicated to the memory of George F. McNulty}

\subjclass{06B15, 06C05, 08B99}
\keywords{congruence lattice, congruence variety, 
finite (equational) basis, projective lattices, higher Arguesian identities}

\begin{abstract}
We show that for a large class of varieties of algebras, the 
equational theory of the congruence lattices of the members 
is not finitely based.
\end{abstract}

\maketitle

\noindent
The Version of Record of this article is published in Algebra Universalis, and is available online 
at https://doi.org/10.1007/s00012-023-00840-6.
Also see the first author's website given at the end of this manuscript.

\section{Introduction}\label{sec:intro}

Let $\mathscr V$ be a variety of algebras and let
\begin{equation}\label{eq:conv}
\alg{Con}(\mathscr V) = \{\alg{Con}(\alg A) : \alg A \in \mathscr V\}.
\end{equation}
The variety of lattices, $\VV\,\alg{Con}(\mathscr V)$,
generated by the congruence lattices of the 
members of $\mathscr V$, is called the \emph{congruence
variety} associated with $\mathscr V$. Congruence varieties
originated with Nation in his 
thesis~\cite{Nation1973}; he  
showed, among other things, that the lattice variety 
generated by
$\mathbf N_5$ (the 5 element nonmodular lattice) is not
a congruence variety; 
see~\cite[Theorem 6.99]{FreeseMcKenzieMcNultyTaylor2022}.

A lattice is \emph{meet semidistributive} if it satisfies the
(universally quantified) implication
\[
x\meet y = x \meet z \,\rightarrow\, x\meet y = x\meet (y \join z)
\]
It is  \emph{join semidistributive} if it satisfies the dual
condition and it is \emph{semidistributive} if it satisfies both.

In \cite{FreeseJonsson1976} Freese and J\'onsson proved that every
modular congruence variety actually satisfies the arguesian identity.
Since the arguesian identity is properly stronger than the modular law (as
witnessed by the lattice of subspaces of any nonarguesian projective plane),
this implies, for example, that the variety of all modular lattices
is not a congruence variety. That the variety of all arguesian lattices
is not a congruence variety was shown in \cite{FreeseHerrmannHuhn1981}.

In \cite[Problem 9.12]{Jonsson1982:b} B.~J\'onsson asked 
if any nontrivial congruence variety could be finitely based
other than the variety of distributive lattices and the variety of all lattices. 
For congruence modular varieties this question was completely answered
by the first author
with the following theorem.

\begin{theorem}[{\cite[Theorem 4]{Freese1994}}]\label{thm:freese94}
 There is no nontrivial 
finitely based modular congruence variety other than the variety 
of distributive lattices. 
\end{theorem}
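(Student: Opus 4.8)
The plan is to assume, toward a contradiction, that $\mathscr C = \VV\,\alg{Con}(\mathscr V)$ is a nontrivial modular congruence variety that is finitely based but \emph{not} distributive, and then to produce an infinite ascending chain of lattice identities lying in $\op{Th}(\mathscr C)$ whose witnesses defeat every finite axiomatization by an ultraproduct argument. First I would record two standing facts. By the Freese--J\'onsson theorem quoted above, $\mathscr C$ is arguesian; and since a modular lattice is distributive exactly when it omits $\alg M_3$, the non-distributivity of $\mathscr C$ forces $\alg M_3 \in \mathscr C$. The first substantial step is to invoke the commutator theory of congruence modular varieties to locate, \emph{inside} $\mathscr C$, the congruence variety of a nontrivial variety of modules: the abelian (commutator-theoretic) part of $\mathscr V$ coordinatizes a ring, and the submodule lattices of the associated modules---projective-geometry lattices over a prime field $\mathrm{GF}(p)$ or over $\mathbb Q$---embed into congruence lattices of members of $\mathscr V$. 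This fixes a concrete geometric sublattice structure inside $\mathscr C$ on which to test identities.

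Next I would bring in the higher Arguesian identities $\gamma_1,\gamma_2,\dots$. Two properties are needed. First, each $\gamma_n$ belongs to $\op{Th}(\mathscr C)$: this should follow by extending the Freese--J\'onsson argument, combining Haiman's result that the $\gamma_n$ hold in all linear lattices with the commutator-theoretic reduction of the modular case to the congruence-permutable (module) case, so that every modular congruence variety satisfies every $\gamma_n$. Second, the $\gamma_n$ form a \emph{strictly} ascending chain of strength within the arguesian lattices: for each $n$ there is a finite modular lattice $\alg A_n$ with $\alg A_n \models \gamma_k$ for $k\le n$ but $\alg A_n \not\models \gamma_{n+1}$, the separating lattices being built from von Neumann $n$-frames and higher-dimensional projective configurations over the field supplied by the previous step.

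The finish is an ultraproduct argument. Since $\gamma_{n+1}\in\op{Th}(\mathscr C)$ while $\alg A_n\not\models\gamma_{n+1}$, each $\alg A_n$ lies outside $\mathscr C$. Fixing a nonprincipal ultrafilter $\mathcal U$ on $\omega$, the ultraproduct $\alg A \in \Pu\{\alg A_n : n\in\omega\}$ satisfies every $\gamma_k$, because $\alg A_n\models\gamma_k$ for all $n\ge k$, a cofinite (hence $\mathcal U$-large) set. The decisive claim is that $\alg A\in\mathscr C$, and here the structural identification from the first step is used: within the geometric class determined by $\mathscr C$, the full list $\{\text{modular}\}\cup\{\gamma_n : n\in\omega\}$ captures membership in $\mathscr C$, so $\alg A$ returns to $\mathscr C$. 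Granting this, a finite basis $\Sigma$ with $\mathscr C=\op{Mod}(\Sigma)$ would make $\mathscr C$ an elementary class whose complement is also elementary and therefore closed under $\Pu$; but $\alg A_n\notin\mathscr C$ for all $n$ together with $\alg A\in\mathscr C$ contradicts that closure. Hence a nontrivial modular congruence variety can be finitely based only if it is distributive.

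The main obstacle is the cluster of structural inputs in the middle, and above all the membership $\alg A\in\mathscr C$. Concretely one must (a) verify the strict hierarchy of the $\gamma_n$ by explicit modular lattices failing precisely the ``late'' identities; (b) arrange these witnesses and their ultraproduct to live in the correct geometric class so that the ultraproduct re-enters $\mathscr C$; and (c) secure $\gamma_n\in\op{Th}(\mathscr C)$ \emph{uniformly} for all modular congruence varieties, not merely the congruence-permutable ones. The commutator-theoretic reduction of the first step is what makes (a)--(c) simultaneously available, and getting it to interface cleanly with the higher Arguesian hierarchy is the crux of the whole argument.
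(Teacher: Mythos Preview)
Your overall architecture---separating lattices $\alg A_n\notin\mathscr C$ whose nonprincipal ultraproduct lands back in $\mathscr C$---is exactly the paper's, and your first step (locating $\VV\,\alg{Con}(\mathscr M_{\alg P})\subseteq\mathscr C$ via commutator theory) is also the right one. The higher Arguesian identities are indeed the key, and the ``outward'' direction ($\alg A_n\notin\mathscr C$ because $\mathscr C$ satisfies all of them while $\alg A_n$ fails a late one) is essentially correct.

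The genuine gap is precisely where you flag it: the claim that the ultraproduct $\alg A$ returns to $\mathscr C$. Your proposed mechanism---that $\{\text{modular}\}\cup\{\gamma_n\}$ ``captures membership in $\mathscr C$'' within some geometric class---does not work and cannot be made to work: the higher Arguesian identities do not axiomatize $\SS\,\alg{Con}(\mathscr M_{\alg P})$ or anything close to it. The paper bypasses this entirely. The separating lattices are not generic witnesses to the strict hierarchy; they are Haiman's very specific lattices $\alg H_n(\alg F)$, whose crucial structural feature is that \emph{every sublattice generated by fewer than $n$ elements embeds into the subspace lattice of a finite-dimensional $\alg F$-vector space}. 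Since $\SS\,\alg{Con}(\mathscr M_{\alg P})$ is a quasivariety (closed under $\SS$, $\PP$, and $\Pu$), any quasi-identity in its theory uses only finitely many variables and therefore holds in $\alg H_m(\alg F)$ for all sufficiently large $m$; hence the ultraproduct satisfies every such quasi-identity and lies in $\SS\,\alg{Con}(\mathscr M_{\alg P})\subseteq\mathscr C$. So the re-entry of $\alg A$ into $\mathscr C$ is secured not by the $\gamma_n$ at all, but by the combination of (i) Haiman's ``small sublattices are linear'' property and (ii) the quasivariety axiomatization of $\SS\,\alg{Con}(\mathscr M_{\alg P})$. Your items (a)--(c) are handled simultaneously by this one construction rather than by separate arguments.
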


In this paper we come close to answering J\'onsson's problem 
by extending Theorem~\ref{thm:freese94} with the following theorem:

\begin{theorem}\label{thm:fbcv}
Let $\mathscr V$ be a variety of algebras 
such that 
$\Con(\mathscr V)$ satisfies a nontrivial lattice identity.
Then, if\/ $\Con(\mathscr V)$ has a finite basis for
its equations, $\alg{Con}(\mathscr V)$ is semidistributive.
\end{theorem}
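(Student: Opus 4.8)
The plan is to establish the contrapositive through a structural dichotomy, routing one branch into Theorem~\ref{thm:freese94} and treating the other directly. The hypothesis that $\alg{Con}(\mathscr V)$ satisfies a nontrivial lattice identity is equivalent to saying that $\mathscr V$ satisfies a nontrivial congruence identity, and by the Mal'cev-condition theory of congruence identities this is a property of the clone of $\mathscr V$ alone. The first step is to invoke the known dichotomy for such varieties: a variety satisfying a nontrivial congruence identity is either congruence modular or congruence meet-semidistributive. This separates the argument into a modular branch and a meet-semidistributive branch.

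In the modular branch, $\alg{Con}(\mathscr V)$ is a modular congruence variety that is finitely based and nontrivial, so Theorem~\ref{thm:freese94} forces it to be the variety of distributive lattices; since distributive lattices are semidistributive, the conclusion holds. This also confirms the internal consistency of the statement, since a modular semidistributive variety is distributive, so the present theorem subsumes Theorem~\ref{thm:freese94}. It remains to treat the branch where $\alg{Con}(\mathscr V)$ is meet-semidistributive, and here two things must be done: first, promote meet-semidistributivity to full semidistributivity, and second---this being the substance of the paper---show that failure of semidistributivity is incompatible with a finite basis.

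For the finite-basis mechanism I would follow and extend Freese's strategy from the modular case. A congruence variety that fails to be distributive but is modular contains von Neumann $n$-frames of every order, hence projective images of subspace lattices of projective spaces of unbounded dimension; these witness that the higher Arguesian identities $\alpha_1, \alpha_2, \ldots$ form a strictly increasing chain inside the theory of $\alg{Con}(\mathscr V)$. Suppose, toward a contradiction, that a finite set $\Sigma$ axiomatizes this theory. Using projective lattices as witnesses of non-derivability, I would exhibit for each $n$ a lattice that satisfies $\Sigma$ but fails $\alpha_{n+1}$; this shows that no finite $\Sigma$ can imply the whole chain, so the $\alpha_n$---all of which belong to the theory---are independent over $\Sigma$, contradicting that $\Sigma$ is a basis. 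Thus a non-distributive modular, and more generally a non-semidistributive, congruence variety cannot be finitely based.

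The main obstacle is twofold. The join-semidistributive half of the conclusion is not formal: congruence varieties are not closed under order duality, so one cannot simply dualize the meet-side argument, and a separate proof that a finitely based meet-semidistributive congruence variety is join-semidistributive (or a direct identification of the offending affine behavior, namely a copy of $\mathbf M_3$ arising from type-$2$ quotients) is required. More seriously, the independence argument demands genuine control of the witnessing lattices: one must verify, for infinitely many $n$, that the relevant frame and projective-space lattices actually lie in $\alg{Con}(\mathscr V)$---or at least that the failure of $\alpha_{n+1}$ is provably not a consequence of the finite basis---so that the $\alpha_n$ cannot collapse to finitely many. Carrying this out in the full non-distributive situation, rather than only the classical modular computation of Freese~1994, is the technically demanding heart of the proof.
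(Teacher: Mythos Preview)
The central gap is the dichotomy you invoke: it is \emph{not} true that a variety satisfying a nontrivial congruence identity must be either congruence modular or congruence meet-semidistributive. The paper itself points out (in the remarks after Theorem~\ref{thm:embedding}) that the join of two congruence varieties is again a congruence variety; joining $\VV\,\alg{Con}(\mathscr M_p)$ with $\VV\,\alg{Con}(\mathscr P)$ (Polin's variety) already produces a proper congruence variety that is neither modular nor meet-semidistributive. So your ``modular branch / meet-SD branch'' split does not cover the cases, and routing the modular branch to Theorem~\ref{thm:freese94} leaves the genuinely new situation---nonmodular, non-SD, yet satisfying a nontrivial identity---completely untouched. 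That situation is exactly what the paper is about.

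What the paper actually does is orthogonal to your plan. There is no case split on modularity. Instead, using that any variety with a nontrivial congruence identity has a weak difference term (Kearnes--Szendrei), the authors prove that $\alg M_3$ is projective in \emph{every} proper congruence variety (Theorem~\ref{thm:m3projective}); combined with the permutability forced by $\alg M_3$-sublattices in the presence of a weak difference term, this shows that Haiman's lattices $\alg H_n(\alg F)$ lie in \emph{no} proper congruence variety whatsoever, generalizing~(1) to~(1$'$). For~(2$'$), they show directly that a non-meet-SD variety with a weak difference term admits subspace lattices of arbitrarily large finite dimension over some prime field (Theorem~\ref{thm:embedding}), so the ultraproducts of the $\alg H_n(\alg F)$ land inside $\mathscr K$. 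The step you flagged as ``the substance of the paper''---upgrading meet-SD to join-SD---is in fact a one-line citation of Kearnes--Kiss \cite[Theorem~8.14]{KearnesKiss2013}, where for varieties congruence meet-SD, congruence join-SD, and join-SD of the congruence variety are all equivalent. Your instinct about higher Arguesian identities and frame-like witnesses is in the right spirit, but the specific machinery (projectivity of $\alg M_3$ beyond the modular world, weak difference terms, Haiman's $\alg H_n$) is what makes the argument go through without any modularity hypothesis.
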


We now outline how we prove Theorem~\ref{thm:fbcv}. 
For each field $\mathbf F$ with at least 3 elements
Haiman~\cite{Haiman1991} has constructed a sequence of modular
lattices $\mathbf H_n(\mathbf F)$, $n\ge 3$. When $n\ge 4$ these lattices are Aguresian
but cannot be represented as lattices of permutable 
equivalence relations. In proving Theorem~\ref{thm:freese94} we showed
that for every $n \ge 3$ and every field $\mathbf F$ with $|\mathbf F| > 2$,

\begin{enumerate}[\quad\normalfont (1)]
 \item
 $\mathbf H_n(\mathbf F)$ lies in no modular congruence variety.
 \item
 For any modular, nondistributive congruence variety $\mathscr K$ there is 
 a field $\mathbf F$ such that a nonprincipal ultraproduct 
 of the $\mathbf H_n(\mathbf F)$'s is in $\mathscr K$.
\end{enumerate}
To prove Theorem~\ref{thm:fbcv} we strengthen these statements as follows. 
By a {\it proper}
congruence variety we mean one that is not the variety of
all lattices.
\begin{theorem}\label{thm:main}
Let $\mathbf H_n(\mathbf F)$ be Haiman's lattices for $\mathbf F$, $|\mathbf F| >2$, 
and  $n \ge 3$.
Then 
\begin{enumerate}[\quad \normalfont(1$'$)]
 \item
 $\mathbf H_n(\mathbf F)$ lies in no congruence variety, 
 except the variety of all lattices.
 \item
 For any congruence variety $\mathscr K$ that is not join semidistributive,
 there is 
 a field $\mathbf F$ such that every nonprincipal ultraproduct 
 of the $\mathbf H_n(\mathbf F)$'s is in~$\mathscr K$.
\end{enumerate}
\end{theorem}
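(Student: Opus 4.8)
The plan is to reduce both assertions to the modular statements (1) and (2) recalled above, the bridge being the structural dichotomy for congruence varieties: \emph{every proper congruence variety is either modular or join-semidistributive} (equivalently, a variety with a nontrivial congruence identity is congruence modular or congruence meet-semidistributive, and for a congruence variety the latter alternative excludes the diamond). I would invoke this as the essential input; it is the Kearnes--Kiss solution, in \emph{The Shape of Congruence Lattices}, of a problem of J\'onsson. The only elementary fact needed alongside it is that $\mathbf H_n(\mathbf F)$ is modular but not distributive, hence contains a copy of $\mathbf M_3$ (the five-element modular nondistributive lattice) and in particular is \emph{not} semidistributive; the same is then true of every nonprincipal ultraproduct $\prod_U \mathbf H_n(\mathbf F)$, since modularity and nondistributivity are both preserved under ultraproducts.

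For (1$'$), suppose $\mathbf H_n(\mathbf F)$ lies in a congruence variety $\mathscr K$ different from the variety of all lattices. Then $\mathscr K$ satisfies a nontrivial lattice identity, so it is a proper congruence variety. Since the member $\mathbf H_n(\mathbf F)$ is not join-semidistributive, $\mathscr K$ is not join-semidistributive, and the dichotomy forces $\mathscr K$ to be modular. But then $\mathbf H_n(\mathbf F)$ lies in a modular congruence variety, contradicting (1). Hence no proper congruence variety contains $\mathbf H_n(\mathbf F)$, which is exactly (1$'$).

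For (2$'$), let $\mathscr K$ be a congruence variety that is not join-semidistributive. If $\mathscr K$ is the variety of all lattices there is nothing to prove, so assume $\mathscr K$ is proper. By the dichotomy $\mathscr K$ is modular, and since it is not join-semidistributive it is not distributive; thus $\mathscr K$ is a modular, nondistributive congruence variety. Statement (2) now supplies a field $\mathbf F$ with $|\mathbf F|>2$ and one nonprincipal ultraproduct of the $\mathbf H_n(\mathbf F)$ lying in $\mathscr K$. To obtain \emph{every} such ultraproduct, I would sharpen (2) to the statement that, for this $\mathbf F$, each lattice identity $\varepsilon$ holding in $\mathscr K$ is satisfied by all but finitely many $\mathbf H_n(\mathbf F)$. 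Granting this, if $U$ is any nonprincipal ultrafilter then $\{\,n : \mathbf H_n(\mathbf F)\models\varepsilon\,\}$ is cofinite, hence lies in $U$, so by the theorem of \L o\'s we get $\prod_U \mathbf H_n(\mathbf F)\models\varepsilon$; as $\varepsilon$ ranges over the identities of $\mathscr K$ this yields $\prod_U \mathbf H_n(\mathbf F)\in\mathscr K$.

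The substantive work, and the step I expect to be the main obstacle, is twofold. First, the whole reduction rests on the dichotomy: the real depth here is precisely that a merely non-join-semidistributive, a priori non-modular, congruence variety is already modular, which is what lets the first author's modular machinery apply. Second, upgrading ``some ultraproduct'' to ``every ultraproduct'' requires the uniform, cofinite form of (2): one must show that the identities valid in the modular congruence variety $\mathscr K$ are matched by Haiman's lattices for all large $n$. This is where the coordinatization of $\mathscr K$ by $\mathbf F$ meets Haiman's construction---using that $\mathbf H_n(\mathbf F)$ is assembled from von Neumann $\mathbf F$-frames and violates only the higher Arguesian identities of index $\ge n$---and it is also where one must ensure the coordinatizing field genuinely has more than two elements, so that the $\mathbf H_n(\mathbf F)$ are defined. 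I expect this cofiniteness and field analysis, essentially the technical core already implicit in the proof of (2), to be the part demanding the most care.
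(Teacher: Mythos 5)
Your reduction hinges on the claim that every proper congruence variety is either modular or join-semidistributive, and that claim is false; it is precisely the gap between Theorem~\ref{thm:freese94} and the present theorem. What Kearnes and Kiss actually prove is that satisfying a nontrivial congruence identity is equivalent to having a Hobby--McKenzie term, and that (in the presence of such an identity) congruence join-semidistributivity, meet-semidistributivity, and semidistributivity coincide; none of this forces a non-semidistributive proper congruence variety to be modular. Concretely, if $\mathscr M_p$ is a variety of vector spaces and $\mathscr P$ is Polin's variety, their varietal product still has a Hobby--McKenzie term (idempotent Mal'tsev conditions pass to varietal products), so its congruence variety is proper; it contains $\alg M_3$ (hence is not join-semidistributive) and contains nonmodular lattices from $\VV\,\alg{Con}(\mathscr P)$ (hence is not modular). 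Note also that if your dichotomy held, Theorem~\ref{thm:fbcv} would be an immediate corollary of Theorem~\ref{thm:freese94}, whereas the whole point of the paper is the nonmodular case. Both halves of your argument collapse at this step: for (1$'$) a proper, nonmodular, non-join-semidistributive congruence variety could a priori contain $\alg H_n(\alg F)$, and for (2$'$) you cannot hand $\mathscr K$ to the modular statement (2).

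What is actually needed in place of the dichotomy is commutator theory for varieties with a weak difference term (which every variety with a proper congruence variety has, by Kearnes--Szendrei). For (1$'$) one shows that $\alg M_3$ is projective for every proper congruence variety, using the Kearnes--Kiss abelian-interval results to pull the three atoms back to congruences forming an $\alg M_3$ inside a solvable (hence modular, hence permutable) interval; permutability of the pulled-back $x_i,x_i'$ then forces ($\text{D}_n^*$) to hold upstairs and hence downstairs in $\alg H_n(\alg F)$, a contradiction. For (2$'$) one proves an embedding theorem: if $\mathscr V$ has a weak difference term and is not congruence meet-semidistributive, the subspace lattices of all finite-dimensional vector spaces over some prime field $\alg P$ embed into congruence lattices in $\mathscr V$, whence $\VV\,\alg{Con}(\mathscr M_{\alg P})\subseteq\mathscr K$. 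Your cofiniteness argument for upgrading to \emph{every} nonprincipal ultraproduct is the right idea, but it should be run with the quasi-equations axiomatizing the prevariety $\SS\,\alg{Con}(\mathscr M_{\alg P})$ (each involving finitely many variables, hence holding in $\alg H_m(\alg F)$ for all large $m$ by Haiman's results on proper sublattices), not with the identities of $\mathscr K$; and the case where $\alg P$ is the two-element field is handled by choosing $\alg F$ to be a larger field with prime subfield $\alg P$.
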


That these statements imply Theorem~\ref{thm:fbcv} is standard;
see~\cite[Theorem~8.52]{FreeseMcKenzieMcNultyTaylor2022}.

The ideas proving (2$'$) also yield interesting results about embedding lattices into members of
$\alg {Con}(\mathscr V)$ for $\mathscr V$ a variety with a weak difference term
which is not congruence meet semidistributive. Such varieties admit a large 
class of modular lattices, as is shown in \S\ref{sec:embed}.

\section{Preliminaries}

Before we begin the proof of (1$'$) and (2$'$) we prove some basic facts 
and introduce some notation. First we will (usually) use $\mathscr V$ to denote
a variety of algebras. $\mathscr K$ denotes a variety of lattices, usually
a congruence variety. This convention is essentially the opposite of the one
used in~\cite{Freese1994}, but is more standard now-a-days.
We let $\HH$, $\SS$, $\PP$, and $\VV = \HH\SS\PP$ be the usual class
operators as defined in 
\S 4.10 
of~\cite{McKenzieMcNultyTaylor1987}.
As mentioned in \eqref{eq:conv} above, for $\mathscr V$ a class of algebras, we let 
$\alg {Con}(\mathscr V)$ simply be the class of all congruence lattices of members
of $\mathscr V$ and use the class operators for the congruence variety:
$\VV\,\alg {Con}(\mathscr V) = \HH\SS\PP\,\alg {Con}(\mathscr V)$,
and also for the \textit{congruence prevariety} $\SS\PP\,\alg {Con}(\mathscr V)$.
Note by the next lemma that the congruence variety is
$\HH\SS\,\alg {Con}(\mathscr V)$
and the congruence prevariety is $\SS\,\alg {Con}(\mathscr V)$.

In most of our results we assume that $\mathscr V$ is a variety of
algebras but in almost all cases it is enough to assume it is closed
under $\SS$ and $\PP$.

\begin{lemma}\label{ConV}
For every variety $\mathscr V$ of algebras, the following hold.
\begin{enumerate}[\normalfont (i)] 
\item $\PP\,\Con(\mathscr V) \subseteq \SS\,\Con(\mathscr V)$.
\item $\VV\,\,\Con(\mathscr V) = \HH\,\SS\,\Con(\mathscr V)$.
\item If $\mathscr U$ is the idempotent reduct of $\mathscr V$, then
$\VV\,\,\Con(\mathscr U)= \VV\,\,\Con(\mathscr V) $.
\end{enumerate}
\end{lemma}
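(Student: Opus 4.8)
The plan is to prove (i) by an explicit embedding, to deduce (ii) from (i) by a short calculus of the class operators, and then to obtain (iii) by combining (ii) with the observation that passing to the idempotent reduct enlarges congruence lattices.

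For (i), given algebras $\alg A_i \in \mathscr V$ for $i \in I$, the product $\alg P = \prod_i \alg A_i$ again lies in $\mathscr V$. The map sending a family $(\theta_i)_{i\in I} \in \prod_i \alg{Con}(\alg A_i)$ to the coordinatewise product relation $\prod_i \theta_i$ on $P$ is a lattice embedding of $\prod_i \alg{Con}(\alg A_i)$ into $\alg{Con}(\alg P)$: it preserves order and injectivity, meets are computed coordinatewise as intersections, and joins agree because the join of two congruences is the transitive closure of their union, which (since relational composition and transitive closure act coordinatewise) is again formed coordinatewise. Hence $\prod_i \alg{Con}(\alg A_i)$ is isomorphic to a sublattice of $\alg{Con}(\alg P)\in\alg{Con}(\mathscr V)$, which gives $\PP\,\alg{Con}(\mathscr V)\subseteq\SS\,\alg{Con}(\mathscr V)$.

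For (ii), since $\VV=\HH\SS\PP$ we have $\VV\,\alg{Con}(\mathscr V)=\HH\SS\PP\,\alg{Con}(\mathscr V)$. Applying (i) and the idempotence of $\SS$ yields $\SS\PP\,\alg{Con}(\mathscr V)\subseteq\SS\SS\,\alg{Con}(\mathscr V)=\SS\,\alg{Con}(\mathscr V)$, so by monotonicity of $\HH$ we obtain $\HH\SS\PP\,\alg{Con}(\mathscr V)\subseteq\HH\SS\,\alg{Con}(\mathscr V)$. The reverse inclusion is immediate because any class is contained in its closure under $\PP$, and $\HH\SS$ is monotone. Thus $\VV\,\alg{Con}(\mathscr V)=\HH\SS\,\alg{Con}(\mathscr V)$.

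For (iii), write $\alg A^{\mathrm{id}}$ for the idempotent reduct of $\alg A$, so that $\mathscr U$ is generated by the algebras $\alg A^{\mathrm{id}}$ with $\alg A\in\mathscr V$. One inclusion is easy: every congruence of $\alg A$ is a congruence of the reduct $\alg A^{\mathrm{id}}$, and since meets (intersections) and joins (transitive closures of unions) are computed identically in both lattices, $\alg{Con}(\alg A)$ is a sublattice of $\alg{Con}(\alg A^{\mathrm{id}})$; hence $\alg{Con}(\mathscr V)\subseteq\SS\,\alg{Con}(\mathscr U)$ and therefore $\VV\,\alg{Con}(\mathscr V)\subseteq\VV\,\alg{Con}(\mathscr U)$. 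The reverse inclusion $\VV\,\alg{Con}(\mathscr U)\subseteq\VV\,\alg{Con}(\mathscr V)$ is the main obstacle, since $\alg A^{\mathrm{id}}$ has strictly more congruences than $\alg A$ and one must show these extra congruences do not enlarge the congruence variety. Rather than comparing congruence lattices directly, I would compare the two congruence varieties through the lattice identities they satisfy, using that a variety of lattices is determined by its identities. A lattice identity holds throughout $\alg{Con}(\mathscr V)$ exactly when $\mathscr V$ satisfies the corresponding congruence identity, and by the Pixley--Wille theory this is equivalent to $\mathscr V$ realizing a Mal'cev condition whose defining terms may be chosen idempotent. Since the idempotent term operations of $\mathscr V$ are precisely the term operations of $\mathscr U$, the variety $\mathscr V$ realizes such an idempotent Mal'cev condition if and only if $\mathscr U$ does; hence $\mathscr V$ and $\mathscr U$ satisfy exactly the same congruence identities and the two congruence varieties coincide. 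The delicate point, which I would expect to require an explicit appeal to \cite{FreeseMcKenzieMcNultyTaylor2022}, is exactly the idempotency of the Mal'cev conditions attached to congruence identities; a more hands-on alternative would be to realize each congruence of $\alg A^{\mathrm{id}}$ as a congruence of a suitable algebra built from $\alg A$ inside a power of $\alg A$, but guaranteeing membership in $\mathscr V$ makes that route considerably more technical.
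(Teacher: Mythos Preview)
Your proposal is correct and follows essentially the same route as the paper: the paper cites \cite{FreeseMcKenzie1987} for (i) where you supply the explicit coordinatewise embedding, derives (ii) from (i) just as you do, and for (iii) invokes the Pixley--Wille algorithm (citing \cite[Theorem 6.111]{FreeseMcKenzieMcNultyTaylor2022}) to conclude that congruence identities are characterized by idempotent Mal'cev conditions, exactly as you outline. Your separate ``easy direction'' for (iii) via $\alg{Con}(\alg A)\le\alg{Con}(\alg A^{\mathrm{id}})$ is a harmless addition---the Pixley--Wille argument already gives both inclusions at once, so the paper does not bother to isolate it.
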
 

\begin{proof} 
(i) is
the first statement in Proposition 4.5 of
\cite{FreeseMcKenzie1987} (notice that the proof 
does not require congruence modularity).

(ii) is immediate from (i).

(iii) follows from the well known Pixley-Wille algorithm \cite{Pixley1972,Wille1970},
which shows the property  that some variety
of algebras satisfies a given congruence identity is equivalent to
a weak Mal'cev condition witnessed by idempotent
terms: see for example 
\cite[Theorem 6.111]{FreeseMcKenzieMcNultyTaylor2022}.
\end{proof}

\subsection*{The centrality relation and the commutator}
The \emph{centrality relation} on an algebra $\alg A$ is denoted $\op C (\alpha, \beta; \delta)$
for congruences on $\alg A$; 
see \cite[Definition 11.3 and Lemma 11.4]{FreeseMcKenzieMcNultyTaylor2022:a}
for its definition and basic properties. 
We say $\alpha$ \emph{centralizes $\beta$ modulo $\delta$} whenever the relation
$\op C (\alpha, \beta; \delta)$ holds.

The (term condition) \emph{commutator}, 
$[\alpha,\beta]$, is defined as the least $\delta$ such that 
$\op C(\alpha,\beta;\delta)$.
The centrality relation and the commutator have become a standard tool
in universal algebra. $\alg M_3$ denotes the five element modular, nondistributve lattice. 

\begin{lemma}\label{lemma:M3}
 Suppose $\delta \le \alpha, \beta, \gamma \le \mu$ are congruences on an 
 algebra $\alg A$ that form a copy of $\alg M_3$.
 Then 
 $[\alpha,\alpha] \le \delta$,
 $[\beta,\beta] \le \delta$, and 
 $[\gamma,\gamma] \le \delta$.
\end{lemma}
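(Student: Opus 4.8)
The plan is to derive all three inequalities from two completely general facts about the term-condition commutator, valid on an arbitrary algebra with no modularity hypothesis: that $[\rho,\sigma]\le\rho\meet\sigma$ for all congruences $\rho,\sigma$, and that the centrality relation is closed under joins in its \emph{first} argument. By the symmetry of $\alg M_3$ it suffices to prove $[\alpha,\alpha]\le\delta$, the cases of $\beta$ and $\gamma$ following by permuting the three atoms. Throughout I use that, in our copy of $\alg M_3$, we have $\alpha\meet\beta=\alpha\meet\gamma=\beta\meet\gamma=\delta$ and $\alpha\join\beta=\alpha\join\gamma=\beta\join\gamma=\mu$.

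First I would record $[\rho,\sigma]\le\rho\meet\sigma$. For $[\rho,\sigma]\le\sigma$, observe that $\op C(\rho,\sigma;\sigma)$ holds vacuously: if $\bar u \mathrel\sigma \bar v$ then $t(\bar b,\bar u)\mathrel\sigma t(\bar b,\bar v)$ regardless of the hypothesis, so the implication in the term condition is automatic. For $[\rho,\sigma]\le\rho$, the chain $t(\bar b,\bar u)\mathrel\rho t(\bar a,\bar u)\mathrel\rho t(\bar a,\bar v)\mathrel\rho t(\bar b,\bar v)$, using $\bar a\mathrel\rho\bar b$ together with the hypothesis $t(\bar a,\bar u)\mathrel\rho t(\bar a,\bar v)$, verifies $\op C(\rho,\sigma;\rho)$. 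Applying this with $(\rho,\sigma)=(\beta,\alpha)$ and $(\gamma,\alpha)$ gives $[\beta,\alpha]\le\beta\meet\alpha=\delta$ and $[\gamma,\alpha]\le\gamma\meet\alpha=\delta$; by upward closure of centrality in its third argument these say that $\op C(\beta,\alpha;\delta)$ and $\op C(\gamma,\alpha;\delta)$ both hold.

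Next I would invoke that centrality is closed under joins in its first argument (one of the basic properties in \cite[Lemma 11.4]{FreeseMcKenzieMcNultyTaylor2022:a}): from $\op C(\beta,\alpha;\delta)$ and $\op C(\gamma,\alpha;\delta)$ one obtains $\op C(\beta\join\gamma,\alpha;\delta)$, that is $\op C(\mu,\alpha;\delta)$. Finally, since $\alpha\le\mu$, every $\alpha$-pair is a $\mu$-pair, so by monotonicity in the first argument $\op C(\mu,\alpha;\delta)$ specializes to $\op C(\alpha,\alpha;\delta)$, which is exactly $[\alpha,\alpha]\le\delta$. Permuting atoms yields $[\beta,\beta]\le\delta$ (fixing $\beta$ as second argument and using the first arguments $\alpha,\gamma$) and $[\gamma,\gamma]\le\delta$ (using $\alpha,\beta$).

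The one step that is not pure bookkeeping is the closure under joins in the first argument, and I expect the main subtlety to be that it is essential to keep the \emph{second} argument fixed at $\alpha$. Centrality is \emph{not} closed under joins in its second argument, which is precisely why the general commutator is neither symmetric nor additive; in particular the tempting shortcut $[\alpha,\alpha]\le[\alpha,\mu]=[\alpha,\beta]\join[\alpha,\gamma]$ is unavailable without modularity. Routing the argument through the first argument is what makes it work: given a chain realizing a $(\beta\join\gamma)$-pair in the first coordinate, one applies the term condition for $\beta$ and for $\gamma$ step by step, propagating the statement that the two columns of the current row are $\delta$-related from one end of the chain to the other. This succeeds exactly because that statement is simultaneously the hypothesis and the conclusion of the term condition, a propagation that would fail if one tried to chain in the second coordinate.
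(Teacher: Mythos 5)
Your overall route is exactly the paper's: centralize the target atom (placed in the \emph{second} argument) by each of the other two atoms modulo $\delta$, use closure of centrality under joins in the first argument to get $\op C(\mu,\alpha;\delta)$, and then shrink the first argument to $\alpha$. The paper runs this for $\gamma$ and cites symmetry; you run it for $\alpha$ and cite symmetry, and you additionally unwind the term-condition verifications that the paper delegates to \cite[Lemma 11.4]{FreeseMcKenzieMcNultyTaylor2022:a}. Your closing remarks on why one must chain in the first coordinate rather than the second are accurate and match the reason the paper's proof is organized the way it is.

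One step is misjustified, though. You pass from $[\beta,\alpha]\le\delta$ to $\op C(\beta,\alpha;\delta)$ by ``upward closure of centrality in its third argument,'' and that principle is false for the term-condition commutator: $[\rho,\sigma]\le\theta$ does not in general imply $\op C(\rho,\sigma;\theta)$. The paper itself relies on this failure when it distinguishes the statement ``$I[\beta,\alpha]$ is abelian,'' i.e.\ $\op C(\alpha,\alpha;\beta)$, from the strictly weaker $[\alpha,\alpha]\le\beta$. The repair is immediate because here $\delta$ is not merely an upper bound for $[\beta,\alpha]$ but equals $\beta\meet\alpha$ on the nose: the set of $\theta$ with $\op C(\rho,\sigma;\theta)$ is closed under meets in the third argument, so the two facts you actually verified, $\op C(\beta,\alpha;\beta)$ and $\op C(\beta,\alpha;\alpha)$, combine directly to give $\op C(\beta,\alpha;\beta\meet\alpha)=\op C(\beta,\alpha;\delta)$, with no detour through the commutator (this packaged statement is exactly the paper's citation of Lemma 11.4(vi)). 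With that substitution your argument is correct and coincides with the paper's.
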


\begin{proof}
 By  \cite[Lemma~11.4(vi)]{FreeseMcKenzieMcNultyTaylor2022:a}), 
 $C(\alpha,\gamma;\alpha\meet\gamma)$ holds. Since $\delta = \alpha\meet\gamma$, 
 we have $C(\alpha,\gamma;\delta)$.
 Similarly $C(\beta,\gamma;\delta)$. By \cite[Lemma~11.4(iv) and (i)]{FreeseMcKenzieMcNultyTaylor2022:a})
 $\op C(\alpha \join\beta, \gamma; \delta) = \op C(\mu , \gamma; \delta)$, and so 
 $\op C(\gamma, \gamma; \delta)$.  Using the definition 
 of the commutator, this yields $[\gamma,\gamma] \le \delta$. The other two 
 inequalities follow from symmetry.
\end{proof}

\subsection*{Weak difference terms}
Let $\alg A$ be an algebra. A term $d(x,y,z)$ in the signature of $\alg A$
is a \emph{weak difference term} for $\alg A$ if, for all $a$, $b\in A$ and all
$\theta\in \Con(\alg A)$ with $\la a, b \ra \in \theta$,
\begin{equation}
a \mathrel{[\theta,\theta]} d(a, b, b) \quad\text{ and }\quad d(a, a, b) \mathrel{[\theta,\theta]} b.
\end{equation}
Most of the properties 
of the modular commutator hold in varieties with a weak difference term. The next lemma is an illustration.
It is special case of more general permutability results given
in \cite{Lipparini1994}; see for example Theorem~3.5(i) of that paper.

\begin{lemma}\label{lem:M3-permute}
 Let $\alg A$ be an algebra with a weak difference term $d$, and let 
 $\alpha$ and $\beta\in \Con(\alg A)$. Then.
 \begin{enumerate}[\quad \normalfont(1)]
 \item if $[\alpha,\alpha]\le \beta$ and $[\beta,\beta]\le \alpha$, then
 $\alpha$ and $\beta$ permute;
 \item if $\alpha$ and $\beta$ are atoms of a sublattice of\/ $\Con(\alg A)$
 isomorphic to $\alg M_3$, 
 then $\alpha$ and $\beta$ permute.
 \end{enumerate}
\end{lemma}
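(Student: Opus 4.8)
The plan is to prove both parts of Lemma~\ref{lem:M3-permute} by reducing to the permutability criterion in part~(1), which itself should follow from the defining property of the weak difference term together with the commutator hypotheses. I would begin with part~(1), since part~(2) is designed to fall out of it once we verify that the $\alg M_3$ hypothesis supplies the needed commutator inequalities.

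\smallskip

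For part~(1), the goal is to show $\alpha \comp \beta = \beta \comp \alpha$, equivalently that $\la a,c\ra \in \alpha\comp\beta$ whenever $\la a,c\ra \in \beta\comp\alpha$. So suppose $a \mathrel{\beta} b \mathrel{\alpha} c$, and consider the element $d(a,b,c)$, where $d$ is the weak difference term. The strategy is to chase this element through the two congruences and use the weak-difference identities to absorb the commutator terms. Since $\la a,b\ra \in \beta$, the defining property gives $a \mathrel{[\beta,\beta]} d(a,b,b)$, and since $[\beta,\beta]\le\alpha$ by hypothesis we get $a \mathrel{\alpha} d(a,b,b)$. Similarly, using $\la b,c\ra\in\alpha$ and $[\alpha,\alpha]\le\beta$ one expects to relate $d(a,b,c)$ to $c$ modulo $\beta$. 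The idea is then to exhibit a single element, namely $d(a,b,c)$, that is $\alpha$-related to $a$ and $\beta$-related to $c$ (or the reverse), thereby witnessing $\la a,c\ra \in \alpha\comp\beta$. Making both halves of this work simultaneously requires applying the term both to $(a,b,c)$ and possibly to shifted triples and combining congruence relations that sit above the relevant commutators; this is the delicate bookkeeping step. Since permutability is a symmetric condition, establishing one containment suffices.

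\smallskip

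For part~(2), suppose $\alpha$ and $\beta$ are atoms of a sublattice of $\Con(\alg A)$ isomorphic to $\alg M_3$. Such a copy of $\alg M_3$ consists of a bottom $\delta$, a top $\mu$, and three pairwise-incomparable middle elements; as atoms, $\alpha$ and $\beta$ are two of these middle elements, and I would take the third, call it $\gamma$, together with $\delta = \alpha\meet\beta$ as the bottom. Applying Lemma~\ref{lemma:M3} to this $\alg M_3$ yields $[\alpha,\alpha]\le\delta$ and $[\beta,\beta]\le\delta$. Since $\delta\le\alpha$ and $\delta\le\beta$, we immediately obtain $[\alpha,\alpha]\le\delta\le\beta$ and $[\beta,\beta]\le\delta\le\alpha$, which are exactly the hypotheses of part~(1). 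Thus part~(1) applies and $\alpha$ and $\beta$ permute.

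\smallskip

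The main obstacle I anticipate is the congruence-chasing in part~(1): one must choose the right element (presumably $d(a,b,c)$ or a close variant) and correctly track which commutator each discrepancy lives in, so that the hypotheses $[\alpha,\alpha]\le\beta$ and $[\beta,\beta]\le\alpha$ can be invoked to collapse those discrepancies into $\alpha$ or $\beta$ as needed. The weak difference term only controls things modulo the commutators, so the whole argument hinges on those commutators being absorbed by $\alpha$ and $\beta$ in the right pattern. Once part~(1) is in hand, part~(2) is essentially immediate via Lemma~\ref{lemma:M3}.
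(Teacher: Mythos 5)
Your proposal is correct and follows essentially the same route as the paper: apply the weak difference term to $d(a,b,c)$, use the defining identities at each end together with $d$'s compatibility on ``shifted triples'' (e.g.\ $d(a,b,b)\mathrel{\alpha}d(a,b,c)$ and $d(a,b,c)\mathrel{\beta}d(b,b,c)$) to absorb the commutators via the hypotheses, and deduce (2) from (1) and Lemma~\ref{lemma:M3}. The ``delicate bookkeeping'' you worry about is exactly these two one-line congruence computations, and it goes through without difficulty.
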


\begin{proof}
 For (1) suppose $a \mathrel\alpha b \mathrel \beta c$. Then
 \[
 a \mathrel{[\alpha,\alpha]} d(a, b, b) \mathrel\beta d(a, b, c)
 \]
 and since $[\alpha,\alpha] \le \beta$, we have $a\mathrel\beta d(a, b, c)$.
 A symmetric argument shows $c\mathrel\alpha d(a, b, c)$, and so
 $a\mathrel\beta d(a, b, c) \mathrel\alpha c$. From this it follows
 that $\alpha$ and $\beta$ permute.
 (2) follows from (1) and Lemma~\ref{lemma:M3}
\end{proof}

\subsection*{Mal'tsev conditions and Mal'tsev classes}
A  collection  of varieties defined by a Mal'tsev condition is called a 
\emph{Mal'tsev class}.  Examples include the classes of congruence modular,
congruence distributive, and congruence semidistributive varieties. Other examples include
varieties with a Taylor term and varieties with a Hobby-McKenzie term. 
We are particularly concerned with varieties having a weak difference term.
By Kearnes and Szendrei \cite{KearnesSzendrei1998}
this is a Mal'tsev class; see 
\cite[Theorem 11.59]{FreeseMcKenzieMcNultyTaylor2022:a}. 
Having a weak difference term is a relatively weak property: all of the other 
conditions mentioned, except having a Taylor term, imply the existence of a weak difference term.
We are interested in varieties whose congruence variety is not the variety of all
lattices; that is, varieties that satisfy a nontrivial congruence identity.
These varieties form a Mal'tsev class; in fact it is the class of varieties with
a Hobby-McKenzie term \cite[Theorem A.2(11)]{KearnesKiss2013}.

An interval $I[\beta,\alpha]$ is \emph{abelian} if $\op C(\alpha,\alpha; \beta)$. 
This implies $[\alpha,\alpha] \le \beta$ but the converse is false.
However, if $\mathscr V$ has a weak difference term,  
$I[\beta,\alpha]$ is abelian if and only if $[\alpha,\alpha] \le \beta$.
Consequently, subintervals of abelian intervals are abelian in such varieties,
 \cite[Proposition 4.2]{Lipparini1994}; see also
 \cite[Theorem 11.29]{FreeseMcKenzieMcNultyTaylor2022:a}.

The 
\emph{solvable series for $\alpha$} is defined by 
$[ \alpha] ^0= \alpha$,
$ [\alpha] ^{n+1}= [[\alpha] ^{n},[\alpha] ^{n} ] $.
We say $\alpha$ is \emph{solvable} if $[\alpha] ^{n} = 0$ for some~$n$.
For $\beta \le \alpha$ we say the interval between $\beta $ and $\alpha $
is \emph{solvable} in case there exists some
$n$ such that $[ \alpha] ^{n} \leq \beta  $.  

The next theorem records some facts we need from \cite{Lipparini1994} on the
behavior of the commutator in varieties with a weak difference term.
The first is from \cite[Theorem 5.1]{Lipparini1994} and the second 
is from \cite[p. 197]{Lipparini1994}; see also
\cite[Theorem 11.87]{FreeseMcKenzieMcNultyTaylor2022:a}.

\begin{theorem}\label{wdt} 
Suppose that the algebra $\alg A$ has a weak difference term. Then
the following hold in $\Con(\alg A)$.
\begin{enumerate}[\normalfont (i)] 
 \item (Abelian and solvable intervals are preserved under transpositions)
If the interval $I[\beta,\alpha ]$ is abelian (solvable), 
then  the intervals
$I[\beta\meet\gamma ,\alpha \meet \gamma]$ and
 $I[\beta \join \delta ,\alpha \join \delta ]$ are abelian (solvable).
\item (Solvable intervals are intervals of permuting equivalence
 relations)
If the interval $I[\beta,\alpha ]$ is solvable and 
$ \gamma,\delta \in  I[\beta,\alpha ]$, then 
$ \gamma \circ \delta = \delta \circ \gamma $.
Hence $ I[\beta,\alpha ]$ is modular.
\end{enumerate}
\end{theorem}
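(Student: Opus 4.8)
The plan is to establish both parts of Theorem~\ref{wdt} by reducing everything to the term-condition characterizations of abelian and solvable intervals that hold under a weak difference term, and then exploiting the permutability machinery of Lemma~\ref{lem:M3-permute}(1). For part~(i), I would work directly with the commutator inequality $[\alpha,\alpha]\le\beta$, which by the remark preceding the theorem is equivalent to $I[\beta,\alpha]$ being abelian in a variety with a weak difference term. The key computational facts are the standard commutator identities for meets and joins with a fixed congruence: for the meet-transposition I would show $[\alpha\meet\gamma,\alpha\meet\gamma]\le\beta\meet\gamma$ using monotonicity of the commutator together with $[\alpha\meet\gamma,\alpha\meet\gamma]\le[\alpha,\alpha]\le\beta$ and $[\alpha\meet\gamma,\alpha\meet\gamma]\le\alpha\meet\gamma\le\gamma$. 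The join-transposition $[\alpha\join\delta,\alpha\join\delta]\le\beta\join\delta$ is the genuinely nontrivial direction; here I would expand $[\alpha\join\delta,\alpha\join\delta]$ using additivity of the commutator in each variable to get a join of terms $[\alpha,\alpha]$, $[\alpha,\delta]$, $[\delta,\alpha]$, $[\delta,\delta]$, and bound each by $\beta\join\delta$, invoking $[\alpha,\alpha]\le\beta$ and $[\alpha,\delta]\le\alpha\meet(\alpha\join\delta)\le\alpha\join\delta$ — but this last step is exactly where a weak difference term (rather than full modularity) is needed, so I would appeal to \cite[Theorem 5.1]{Lipparini1994} for the precise inequalities rather than rederive them.

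For the solvable case of part~(i), the plan is to induct on the length $n$ of the solvable series, reducing to the abelian case at each step. If $[\alpha]^n\le\beta$, I would show that the transposed intervals have bounded solvable series by repeatedly applying the abelian-interval transposition results to the successive quotients $[\alpha]^{k}/[\alpha]^{k+1}$; the transposition isomorphisms on these abelian sections let me push the bound $[\alpha\meet\gamma]^n\le\beta\meet\gamma$ (and dually for joins) through the filtration. The main subtlety is keeping track of how the solvable series of $\alpha\meet\gamma$ relates to the meets of the solvable series of $\alpha$ with $\gamma$, which requires the monotonicity and the abelian-subinterval closure noted just before the theorem statement.

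For part~(ii), once an interval $I[\beta,\alpha]$ is known to be solvable, I would show that any two $\gamma,\delta$ in that interval permute by reducing to Lemma~\ref{lem:M3-permute}(1). The strategy is to replace $\gamma$ and $\delta$ by congruences whose commutators are suitably small: since the whole interval is solvable, every subinterval is solvable and hence abelian sections are available, giving $[\gamma,\gamma]\le\delta$ and $[\delta,\delta]\le\gamma$ (or the appropriate meet/join thereof lying in the interval). More precisely, solvability of $I[\beta,\alpha]$ forces $[\gamma,\gamma]$ and $[\delta,\delta]$ to drop below $\beta$ after finitely many steps, and by climbing the solvable series I can reduce permutability of $\gamma$ and $\delta$ to permutability of congruences satisfying the two commutator hypotheses of Lemma~\ref{lem:M3-permute}(1). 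The modularity of $I[\beta,\alpha]$ then follows from the standard fact that an interval of pairwise-permuting congruences is modular, since the permutability identity $\gamma\comp\delta=\delta\comp\gamma$ yields the modular law on that interval.

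I expect the hard part to be the join-transposition inequality in part~(i): meets and monotonicity are essentially formal, but controlling $[\alpha\join\delta,\alpha\join\delta]$ requires the full strength of the weak-difference-term commutator theory, in particular the additivity and the fact that cross-terms like $[\alpha,\delta]$ behave correctly without permutability. For this reason my proof would lean heavily on \cite[Theorem 5.1]{Lipparini1994} rather than attempt a self-contained derivation, citing it for the transposition statements and supplying only the inductive bookkeeping for the solvable case and the Lemma~\ref{lem:M3-permute} reduction for part~(ii).
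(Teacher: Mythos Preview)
The paper does not prove this theorem at all: it is stated as a recorded fact, with part~(i) attributed to \cite[Theorem~5.1]{Lipparini1994} and part~(ii) to \cite[p.~197]{Lipparini1994} (and \cite[Theorem~11.87]{FreeseMcKenzieMcNultyTaylor2022:a}). Your proposal therefore goes well beyond what the paper does, since you actually sketch the arguments rather than merely citing them. Your outline is broadly in line with how Lipparini's proofs run, and since you yourself ultimately defer the join-transposition inequality to \cite[Theorem~5.1]{Lipparini1994}, your plan and the paper's treatment end up resting on the same source.

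One caveat worth flagging: your sketch of the join case invokes ``additivity of the commutator in each variable'' to expand $[\alpha\join\delta,\alpha\join\delta]$ as a join of four cross-terms. In varieties with only a weak difference term the term-condition commutator is \emph{not} generally additive in the second variable (additivity in the first variable of the centrality relation holds, but not in the second), so that expansion is not available as stated. This is precisely why the join direction is the hard one, and why Lipparini's argument is more delicate than a formal additivity computation. Since you already recognize the difficulty and fall back on the citation, this does not break your plan, but you should not present the additivity expansion as if it were the actual mechanism. For part~(ii), your reduction to Lemma~\ref{lem:M3-permute}(1) is exactly right in the abelian case (where $[\gamma,\gamma]\le[\alpha,\alpha]\le\beta\le\delta$ is immediate); for the general solvable case your ``climb the solvable series'' induction is the correct shape, though the details of patching permutability across successive abelian layers need to be spelled out more carefully than you indicate.
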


We list in the next theorem the results we 
need from \cite{KearnesSzendrei1998,KearnesKiss2013}.
For $\alpha$, $\beta$ and $\gamma$ elements of a lattice, 
define $\beta^0 = \beta$, $\gamma^0 = \gamma$,
\begin{equation}\label{beta-m}
\beta^{m+1} = \beta \meet (\alpha \join \gamma^m) \qquad
\gamma^{m+1} = \gamma \meet (\alpha \join \beta^m)
\end{equation}

\begin{theorem}\label{kknontrivialid}
Suppose that $\mathscr V$ is a variety  and that 
the congruence variety  $\mathscr K$ associated with $\mathscr V$
is not the variety of all lattices. Then the following hold. 
\begin{enumerate}[\normalfont (i)] 
 \item\textup{(Kearnes and Szendrei
 \cite[Corollary 4.12]{KearnesSzendrei1998})} $\mathscr V$ 
has a weak difference term.
\item  
\textup{(Kearnes and Kiss 
\cite[Theorem 8.3]{KearnesKiss2013})} There exists a positive integer 
$m$ such that the congruence identity $ \beta ^m=\beta^{m+1} $ holds in  $\mathscr K$.
 \item
\textup{(Kearnes and Kiss 
\cite[Theorem 8.5]{KearnesKiss2013})}
Whenever $\alg A \in \mathscr V$ and  
$\alpha$, $\beta$, $\gamma$ are congruences 
of $\alg A$ such that  $\alpha \join\beta=\alpha\join\gamma$ then
the interval between $\alpha\join(\beta\meet\gamma)$ and
$\alpha\join\beta$ is abelian.
\end{enumerate} 
\end{theorem}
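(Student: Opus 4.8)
The three clauses are imported verbatim from the cited papers of Kearnes--Szendrei and Kearnes--Kiss, so the justification in the paper is a pointer to those sources; nevertheless, here is the conceptual route one would follow to establish them. The common engine is tame congruence theory together with the theory of Mal'tsev conditions. The hypothesis -- that $\mathscr K = \VV\,\Con(\mathscr V)$ is a proper lattice variety, equivalently that $\mathscr V$ satisfies some nontrivial congruence identity -- is precisely what excludes the pathological local behaviour (the unary and semilattice types) that obstructs a well-behaved commutator, and each of (i)--(iii) is a manifestation of this exclusion at a different level.

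For (i) the plan is to show first that the class of varieties admitting a weak difference term is definable by a weak Mal'tsev condition, and then that satisfying \emph{any} nontrivial congruence identity forces that condition. The technical heart is the Kearnes--Szendrei characterization of ``having a weak difference term'' as a Mal'tsev class; this is proved by analysing minimal sets and the traces on prime quotients and extracting the idempotent terms witnessing the difference term from the omission of the bad types. Granting that, one uses the Pixley--Wille translation between congruence identities and idempotent Mal'tsev conditions already invoked in Lemma~\ref{ConV}(iii) to check that a nontrivial congruence identity is incompatible with the presence of the unary or semilattice type, and hence implies the weak-difference-term condition.

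For (ii) and (iii) the structural theory furnished by (i) is already in hand, so the work becomes commutator-theoretic rather than type-theoretic. For (ii), I would first observe that the sequences defined in \eqref{beta-m} are both descending, so that $\beta^0 \ge \beta^1 \ge \beta^2 \ge \cdots$; the real content is that this chain stabilizes after a number of steps $m$ that is \emph{uniform} across $\mathscr K$. I would extract such a bound from the nontrivial identity itself: pushed through the commutator machinery of Theorem~\ref{wdt}, the identity caps the length of the centrality chains that the iterated meet-joins $\beta^m$ can detect, forcing $\beta^m = \beta^{m+1}$ once $m$ exceeds that cap. For (iii), writing $\mu = \alpha\join\beta = \alpha\join\gamma$ and $\delta = \alpha\join(\beta\meet\gamma)$, I would aim to verify $\op C(\mu,\mu;\delta)$ -- which is exactly the assertion that $I[\delta,\mu]$ is abelian -- by decomposing $I[\delta,\mu]$ into pieces each of which is centralized and then combining them through the additivity and monotonicity of the centrality relation, in the spirit of the argument used in Lemma~\ref{lemma:M3}.

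The main obstacle is (i): everything downstream leans on the existence of a weak difference term, and establishing that a nontrivial congruence identity yields one requires the full force of tame congruence theory -- the delicate classification of minimal sets and the extraction of idempotent terms -- rather than the comparatively elementary commutator manipulations that suffice for (ii) and (iii). This is exactly why we quote it rather than reprove it here.
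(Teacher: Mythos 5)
The paper offers no proof of this theorem at all---all three clauses are simply cited from Kearnes--Szendrei and Kearnes--Kiss---and your proposal correctly recognizes this and defers to the same sources, so it matches the paper's treatment exactly. (One small caveat on your conceptual gloss of (i): tame congruence theory applies only to locally finite varieties, whereas the Kearnes--Szendrei result holds for arbitrary varieties and is obtained via the linear commutator rather than minimal-set analysis; but since both you and the paper ultimately just cite the source, this does not affect correctness.)
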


\section{Haiman's lattices and higher order Arguesian identities}

In \cite{Haiman1991} M. Haiman studied a chain of stronger
and stronger higher order Arguesian identities ($\text{D}_n$), given below.
He showed that
$\alg H_n(\alg F)$ witnesses that the congruence identities ($\text{D}_{n}$)
are properly increasing in strength. 
In Theorem~\ref{idequiv}
we present an identity, ($\text{D}_n^*$), equivalent to ($\text{D}_n$)
but which more closely resembles J\'onsson's Arguesian identity and is easier
to work with. 
The equivalence of these identities
is   of independent interest.

\begin{lemma}\label{ABx} 
Let $x_0$, $x_0'$, $A$, $B$ be elements of a modular
lattice such that $x_0\meet x_0'\leq B$ and $x_0 \vee x_0'\geq A$.
Then $A\leq x_0'\join(x_0\meet B)$ if and only if 
$x_0\meet(x_0'\join A)\leq B$.
\end{lemma}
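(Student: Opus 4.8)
The plan is to prove each implication by a single application of the modular law, writing the law in the form $a\le b \Rightarrow a\join(c\meet b)=(a\join c)\meet b$. The two directions are not related by a literal order duality, so I would treat them separately; the point to notice is that the forward implication consumes the hypothesis $x_0\meet x_0'\le B$ while the converse consumes $A\le x_0\join x_0'$.

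For the forward direction I would assume $A\le x_0'\join(x_0\meet B)$. Joining $x_0'$ and then meeting with $x_0$ gives $x_0\meet(x_0'\join A)\le x_0\meet\bigl(x_0'\join(x_0\meet B)\bigr)$. I then rewrite the right-hand side by the modular law, taking the lower element to be $x_0\meet B$ (which sits below $x_0$) and the free element to be $x_0'$, obtaining
\[
x_0\meet\bigl(x_0'\join(x_0\meet B)\bigr)=(x_0\meet B)\join(x_0\meet x_0').
\]
Both joinands lie below $B$: the first trivially, the second by the hypothesis $x_0\meet x_0'\le B$. Hence the whole expression is $\le B$, and the desired inequality $x_0\meet(x_0'\join A)\le B$ follows.

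For the converse I would assume $x_0\meet(x_0'\join A)\le B$ and invoke the second standing hypothesis $A\le x_0\join x_0'$. Combined with the trivial $A\le x_0'\join A$, this yields $A\le(x_0\join x_0')\meet(x_0'\join A)$. Applying the modular law with lower element $x_0'$ (below $x_0'\join A$) and free element $x_0$ rewrites this meet as
\[
(x_0\join x_0')\meet(x_0'\join A)=x_0'\join\bigl(x_0\meet(x_0'\join A)\bigr).
\]
The inner term satisfies $x_0\meet(x_0'\join A)\le x_0\meet B$, being below $x_0$ and, by assumption, below $B$; so $A\le x_0'\join\bigl(x_0\meet(x_0'\join A)\bigr)\le x_0'\join(x_0\meet B)$, as required.

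I do not expect a genuine obstacle. The only care needed is in setting up each modular-law rewrite with the correct comparability ($x_0\meet B\le x_0$ in the first case, $x_0'\le x_0'\join A$ in the second) and in remembering that, because the statement is not self-dual, the two implications cannot be collapsed into one by symmetry.
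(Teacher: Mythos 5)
Your proof is correct and follows essentially the same route as the paper: the forward direction is the identical modular-law computation $x_0\meet(x_0'\join(x_0\meet B))=(x_0\meet x_0')\join(x_0\meet B)\le B$. For the converse the paper simply observes that it is the dual argument under the substitution $A\leftrightarrow B$, $x_0\leftrightarrow x_0'$ (which also swaps the two standing hypotheses); your explicitly written converse is exactly that dual argument, so your remark that the two directions are not related by duality is slightly off, though harmless.
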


\begin{proof}
$A\leq x_0'\join(x_0\meet B)$ is equivalent to 
$x_0'\join A\leq x_0'\join(x_0\meet B)$,
which implies 
$x_0\meet(x_0'\join A)\leq x_0\meet(x_0'\join(x_0\meet B))
=(x_0\meet x_0')\join (x_0 \meet B) \leq B$
by modularity.
The converse is the dual argument (with $B$ in place of $A$
and $x_0$ in place of $x_0'$).
\end{proof}

The next theorem generalizes a result first obtained
by A. Day and D. Pickering \cite{DayPickering1984} in the 
particular case $n=3$.
By Corollary~1 on page 104 of \cite{FreeseMcKenzieMcNultyTaylor2022}, ($\text{D}_3^*$)
is equivalent to the Agruesian identity.

\begin{theorem}\label{idequiv} 
Let $y_i=(x_i\vee x_{i+1}) \wedge  (x_i'\vee x_{i+1}')$
where the indices are computed modulo~$n$ so 
$y_{n-1} = (x_{n-1}\vee x_0) \wedge  (x_{n-1}'\vee x_0')$.
Then the following two equations are equivalent.
\begin{equation}
\tag{$\text{D}_n$}
x_0 \meet (x'_0 \join\bigwedge_{i=1}^{n-1}(x_i\vee x_i'))
\leq
x_1 \vee [(x_0'\vee x_1') \wedge \bigvee_{i=1}^{n-1} y_i]
\end{equation}
and
\begin{equation}
\tag{$\text{D}^*_n$}
\bigwedge_{i=0}^{n-1}(x_i\vee x_i')
\leq
x_0' \vee
(x_0 \wedge (x_1 \vee [(x_0'\vee x_1') \wedge \bigvee_{i=1}^{n-1} y_i]))
\end{equation}
\end{theorem}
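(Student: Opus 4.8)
The plan is to prove the equivalence of $(\text{D}_n)$ and $(\text{D}^*_n)$ by recognizing that both are instances of the modular-lattice identity established in Lemma~\ref{ABx}, applied with a judicious choice of the parameters $A$ and $B$. The key observation is that $(\text{D}_n)$ has the shape $x_0\meet(x_0'\join A)\leq B$ while $(\text{D}^*_n)$ has the shape $A'\leq x_0'\join(x_0\meet B)$, so the bridge between them should be exactly the biconditional of Lemma~\ref{ABx}, provided we can match up the right expressions and verify the two hypotheses $x_0\meet x_0'\leq B$ and $x_0\join x_0'\geq A$.

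First I would set
\[
A=\bigwedge_{i=1}^{n-1}(x_i\vee x_i'),\qquad
B=x_1\vee\bigl[(x_0'\vee x_1')\wedge\textstyle\bigvee_{i=1}^{n-1}y_i\bigr].
\]
With this choice the left-hand inequality in Lemma~\ref{ABx}, namely $x_0\meet(x_0'\join A)\leq B$, is precisely $(\text{D}_n)$. The right-hand inequality $A\leq x_0'\join(x_0\meet B)$ is \emph{almost} $(\text{D}^*_n)$ but not quite: $(\text{D}^*_n)$ has $\bigwedge_{i=0}^{n-1}(x_i\vee x_i')$ on its left side, which is $A\meet(x_0\vee x_0')$ rather than $A$, and on its right it has the inner meet $x_0\wedge B$ buried under $x_0'\vee(\cdots)$. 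So the real work is to show that, modulo the hypotheses of Lemma~\ref{ABx}, the inequality $A\leq x_0'\join(x_0\meet B)$ is equivalent to $(\text{D}^*_n)$.

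The hardest part will be verifying the two side conditions of Lemma~\ref{ABx} and reconciling the discrepancy between $A$ and $\bigwedge_{i=0}^{n-1}(x_i\vee x_i')$. For the hypothesis $x_0\join x_0'\geq A$ to hold one cannot simply take $A=\bigwedge_{i=1}^{n-1}(x_i\vee x_i')$, since that need not lie below $x_0\vee x_0'$; the correct move is to replace $A$ throughout by $\hat A=A\meet(x_0\vee x_0')=\bigwedge_{i=0}^{n-1}(x_i\vee x_i')$, which automatically satisfies $x_0\join x_0'\geq\hat A$ and whose appearance on the left of $(\text{D}^*_n)$ is no coincidence. One then checks that $(\text{D}_n)$ written with $A$ is equivalent to $(\text{D}_n)$ written with $\hat A$, using $x_0\meet(x_0'\join A)=x_0\meet(x_0'\join\hat A)$, which itself follows from modularity because $x_0\leq x_0\vee x_0'$. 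The hypothesis $x_0\meet x_0'\leq B$ holds trivially since $x_0\meet x_0'\leq x_0'\leq x_0'\vee x_1'$ need only be arranged by absorbing an $x_0'$; more carefully, one uses that $B$ can be enlarged by $x_0'$ without affecting the meet with $x_0$ appearing on the right of $(\text{D}^*_n)$.

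Once these reductions are in place, applying Lemma~\ref{ABx} with the pair $(\hat A,B)$ converts $x_0\meet(x_0'\join\hat A)\leq B$ directly into $\hat A\leq x_0'\join(x_0\meet B)$, and the latter unwinds to exactly $(\text{D}^*_n)$ after noting $x_0\meet B$ is the parenthesized expression $x_0\wedge(x_1\vee[\cdots])$ appearing there. I expect the genuine subtlety to lie entirely in the bookkeeping of these absorption identities — specifically in justifying that the two inequalities one feeds into Lemma~\ref{ABx} really are logically equivalent to $(\text{D}_n)$ and $(\text{D}^*_n)$ as stated — rather than in any deep structural fact, since the modular manipulations are all of the elementary $a\meet(b\join c)=(a\meet b)\join c$ type already exploited in the proof of Lemma~\ref{ABx}.
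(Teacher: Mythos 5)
Your overall strategy is exactly the paper's: apply Lemma~\ref{ABx} with $B=x_1\vee[(x_0'\vee x_1')\wedge\bigvee_{i=1}^{n-1}y_i]$ and with $A$ replaced by $\hat A=\bigwedge_{i=0}^{n-1}(x_i\vee x_i')$, after checking via modularity that $x_0\meet(x_0'\join\bigwedge_{i=1}^{n-1}(x_i\vee x_i'))=x_0\meet(x_0'\join\hat A)$. That reduction and the hypothesis $x_0\join x_0'\geq\hat A$ are handled correctly. But your verification of the other hypothesis, $x_0\meet x_0'\leq B$, has a genuine gap. You argue $x_0\meet x_0'\leq x_0'\leq x_0'\vee x_1'$, which only bounds $x_0\meet x_0'$ by \emph{one} of the two meetands of $(x_0'\vee x_1')\wedge\bigvee_{i=1}^{n-1}y_i$; it says nothing about $\bigvee_{i=1}^{n-1}y_i$, and $x_0\meet x_0'\le x_0'\vee x_1'$ alone does not put $x_0\meet x_0'$ below $B$. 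Your fallback --- ``enlarge $B$ by $x_0'$ without affecting the meet with $x_0$'' --- does not work: replacing $B$ by $B\join x_0'$ changes \emph{both} inequalities in Lemma~\ref{ABx} ($x_0\meet(B\join x_0')$ is in general strictly larger than $x_0\meet B$, and $x_0\meet(x_0'\join\hat A)\le B\join x_0'$ is weaker than $(\text{D}_n)$), so the resulting biconditional is no longer between $(\text{D}_n)$ and $(\text{D}_n^*)$. The correct observation, which is the one point of the proof that uses the cyclic structure of the $y_i$'s, is that $x_0\meet x_0'\leq y_{n-1}=(x_{n-1}\join x_0)\meet(x_{n-1}'\join x_0')\leq\bigvee_{i=1}^{n-1}y_i$; combined with $x_0\meet x_0'\le x_0'\vee x_1'$ this gives $x_0\meet x_0'\le(x_0'\vee x_1')\wedge\bigvee_{i=1}^{n-1}y_i\le B$ directly.

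A second, smaller omission: Lemma~\ref{ABx} is a statement about modular lattices, whereas the theorem asserts the equivalence of two identities over all lattices. Before any of your manipulations are legitimate you must first show that each of $(\text{D}_n)$ and $(\text{D}_n^*)$ implies the modular law (the paper does this by the substitution $x_0\mapsto x$, $x_i\mapsto y\join z$ for $i>0$, $x_i'\mapsto y\meet z$). Without that step your argument only yields the equivalence of the two identities \emph{within} modular lattices, i.e., Corollary~\ref{idequiv2} rather than Theorem~\ref{idequiv}.
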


\begin{proof} Both of these equations imply modularity:
for ($\text{D}_n^*$) we make the substitutions
$x_0 \mapsto x$, 
$x_i \mapsto y \join z$ for all~$i>0$, and
$x'_i \mapsto y \meet z$ for all~$i$. A similar substitution works 
for ($\text{D}_n$).

We claim
$x_0 \meet (x'_0 \join\bigwedge_{i=1}^{n-1}(x_i\vee x_i'))=
x_0 \meet (x'_0 \join\bigwedge_{i=0}^{n-1}(x_i\vee x_i'))$.
Indeed, modularity gives
\begin{align*}
 x_0 \meet \big(x'_0 \join\bigwedge_{i=0}^{n-1}(x_i\vee x_i')\big) 
     &= x_0 \meet \big(x'_0 \join [(x_0\join x_0')\meet\bigwedge_{i=1}^{n-1}(x_i\vee x_i')]\big) \\
     &= x_0 \meet (x_0 \join x'_0)\meet[x_0'\join \bigwedge_{i=1}^{n-1}(x_i\vee x_i')]\\
     &= x_0 \meet \big(x'_0 \join\bigwedge_{i=1}^{n-1}(x_i\vee x_i')\big)
\end{align*}

Now apply Lemma \ref{ABx} with
\[
A=\bigwedge_{i=0}^{n-1}(x_i\vee x_i') \quad\text{and}\quad
B=x_1 \vee [(x_0'\vee x_1') \wedge \bigvee_{i=1}^{n-1} y_i].
\]
Notice that $x_0\meet x_0'\leq B$, since 
$x_0 \meet x_0' \leq y_{n-1} = (x_{n-1} \join
x_0) \meet (x'_{n-1} \join x'_0)$, and clearly $x_0 \join x_0' \ge A$.
Thus the hypotheses of Lemma~\ref{ABx} hold. 
Using the claim we see that, for given $x_i$, $x_i'$, the conclusion of
that lemma is exactly the statement that ($\text{D}_n$) holds if
and only if ($\text{D}_n^*$) holds, as desired.
\end{proof}

Actually, the proof of Theorem \ref{idequiv} gives
something more: 

\begin{corollary}\label{idequiv2}
If\/ $\alg L$ is a modular lattice and
$x_i$, $x_i'\in \alg L$, then $x_i$, $x_i'$ satisfy
\textup{($\text{D}_n$)} if and only if $x_i$, $x_i'$ 
satisfy \textup{($\text{D}_n^*$)}.
\end{corollary}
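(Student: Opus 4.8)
The plan is to observe that Corollary~\ref{idequiv2} is essentially already contained in the proof of Theorem~\ref{idequiv}, but stated at the level of individual tuples rather than as an identity. The theorem asserts equivalence of the two \emph{equations} (i.e.\ that ($\text{D}_n$) holds in a modular lattice iff ($\text{D}_n^*$) does); the corollary upgrades this to the pointwise claim that a \emph{fixed} assignment of the variables satisfies ($\text{D}_n$) iff it satisfies ($\text{D}_n^*$). So the task is really to extract the pointwise content from the argument already given.

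First I would fix a modular lattice $\alg L$ and a particular assignment of the elements $x_i$, $x_i'\in\alg L$, and let $y_i$ be defined from them as in the statement of Theorem~\ref{idequiv}. The heart of the matter is that every step in the proof of Theorem~\ref{idequiv} is a manipulation valid for the fixed elements, not merely a universally quantified identity. Specifically, the chain of equalities establishing the claim
\[
x_0 \meet \big(x'_0 \join\bigwedge_{i=1}^{n-1}(x_i\vee x_i')\big)=
x_0 \meet \big(x'_0 \join\bigwedge_{i=0}^{n-1}(x_i\vee x_i')\big)
\]
uses only modularity applied to the concrete elements $x_0$, $x_0'$, and $\bigwedge_{i=1}^{n-1}(x_i\vee x_i')$, so it holds for the chosen assignment. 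Next I would record that Lemma~\ref{ABx} is itself a pointwise statement: with
\[
A=\bigwedge_{i=0}^{n-1}(x_i\vee x_i') \qquad\text{and}\qquad
B=x_1 \vee [(x_0'\vee x_1') \wedge \textstyle\bigvee_{i=1}^{n-1} y_i],
\]
its two hypotheses $x_0\meet x_0'\le B$ and $x_0\join x_0'\ge A$ were verified for the specific elements (the first via $x_0\meet x_0'\le y_{n-1}$, the second trivially). Hence Lemma~\ref{ABx} gives, for this assignment, that $A\le x_0'\join(x_0\meet B)$ holds iff $x_0\meet(x_0'\join A)\le B$ holds.

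Finally I would translate the two equivalent conditions back into the two identities. The condition $x_0\meet(x_0'\join A)\le B$ is, after substituting the chosen $A$ and $B$ and invoking the pointwise claim to replace $\bigwedge_{i=0}^{n-1}$ by $\bigwedge_{i=1}^{n-1}$ inside the left-hand side, exactly the assertion that ($\text{D}_n$) holds at this assignment; and the condition $A\le x_0'\join(x_0\meet B)$ is exactly the assertion that ($\text{D}_n^*$) holds at this assignment. Since Lemma~\ref{ABx} relates these two for the fixed elements, the corollary follows. I do not expect a genuine obstacle here: the only care required is bookkeeping, namely checking that no step in the original proof secretly quantified over the variables. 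The mild subtlety is that the pointwise claim is used to rewrite one side of ($\text{D}_n$), so I would make sure that equality is applied to the fixed assignment rather than invoked as an identity, but this is immediate from the displayed computation above.
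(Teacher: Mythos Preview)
Your proposal is correct and is exactly the paper's approach: the paper does not give a separate proof but simply remarks that ``the proof of Theorem~\ref{idequiv} gives something more,'' i.e.\ that every step (the modular-lattice claim and the application of Lemma~\ref{ABx}) was already pointwise. The only difference is that in the corollary modularity is a hypothesis rather than a consequence of the identities, which you correctly note makes the first paragraph of the theorem's proof unnecessary here.
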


Haiman showed that \textup{($\text{D}_n$)} holds in any lattice
of permuting equivalence relations,
but we require the following
strengthening of that fact.

\begin{lemma}\label{Dninperm} 
If\/ $\alg L$ is a sublattice of the lattice of equivalence relations on a set $A$, and if
$\alpha _i$, $\alpha _i'$ are elements of\/ $\alg L$,
and for each $i$, $\alpha _i$ and $\alpha _i'$ permute,
then the instance of\/ 
\textup{($\text{D}_n^*$)} in which $\alpha _i$ and $\alpha _i'$
are substituted for
$x_i$, $x_i'$,  $i = 0,\ldots,n-1$, holds in $\alg L$.
\end{lemma}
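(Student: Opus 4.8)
The plan is to prove the displayed inclusion of \textup{($\text{D}_n^*$)} directly, by a point chase, invoking permutability only to split the ``diagonal'' joins $\alpha_i\join\alpha_i'$; every other step uses only the two facts valid for arbitrary equivalence relations inside a sublattice of the equivalence lattice, namely that meet is intersection and that $\gamma\comp\delta\subseteq\gamma\join\delta$.

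First I would fix a pair $\la a,b\ra$ in the left-hand side $\bigwedge_{i=0}^{n-1}(\alpha_i\join\alpha_i')$ and aim to place it in the right-hand side. Since $\alpha_i$ and $\alpha_i'$ permute, for each $i$ we have $\alpha_i\join\alpha_i'=\alpha_i\comp\alpha_i'$, so from $\la a,b\ra\in\alpha_i\join\alpha_i'$ I can choose a point $p_i$ with $a\mathrel{\alpha_i}p_i\mathrel{\alpha_i'}b$. This is the \emph{only} place the permutability hypothesis is used, which is precisely what makes this a strengthening of Haiman's result.

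Next I would record the edges of the chase. For every $i$ (indices mod $n$) the relations $p_i\mathrel{\alpha_i}a\mathrel{\alpha_{i+1}}p_{i+1}$ and $p_i\mathrel{\alpha_i'}b\mathrel{\alpha_{i+1}'}p_{i+1}$ give $\la p_i,p_{i+1}\ra\in(\alpha_i\join\alpha_{i+1})\meet(\alpha_i'\join\alpha_{i+1}')=y_i$. Chaining these along $p_1,p_2,\ldots,p_{n-1},p_0$, which uses exactly $y_1,\ldots,y_{n-1}$ and closes up through the wrap-around term $y_{n-1}$, yields $\la p_1,p_0\ra\in\bigvee_{i=1}^{n-1}y_i=:w$. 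Separately, $p_1\mathrel{\alpha_1'}b\mathrel{\alpha_0'}p_0$ gives $\la p_1,p_0\ra\in\alpha_0'\join\alpha_1'$, whence $\la p_1,p_0\ra\in u:=(\alpha_0'\join\alpha_1')\meet w$.

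Finally I would assemble the right-hand side from the inside out. From $a\mathrel{\alpha_1}p_1$ and $\la p_1,p_0\ra\in u$ I get $\la a,p_0\ra\in\alpha_1\comp u\subseteq\alpha_1\join u$; combined with $a\mathrel{\alpha_0}p_0$ this gives $\la a,p_0\ra\in\alpha_0\meet(\alpha_1\join u)$, which is exactly the inner bracketed term of \textup{($\text{D}_n^*$)}. Appending the step $p_0\mathrel{\alpha_0'}b$ then places $\la a,b\ra$ in $\alpha_0'\join\bigl(\alpha_0\meet(\alpha_1\join u)\bigr)$, the right-hand side, completing the chase. I expect the main obstacle to be purely organizational rather than mathematical: keeping the cyclic indices straight modulo $n$ and verifying that the chain of $y_i$'s closes correctly through $y_{n-1}$. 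The lattice-theoretic content is light once the points $p_i$ are in hand, since all the remaining containments are instances of $\gamma\comp\delta\subseteq\gamma\join\delta$ together with the identification of meet with intersection.
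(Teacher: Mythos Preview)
Your proof is correct and follows essentially the same point chase as the paper's own argument: choose witnesses $p_i$ (the paper's $c_i$) via permutability, observe $\la p_i,p_{i+1}\ra\in y_i$, chain $p_1,\ldots,p_{n-1},p_0$ through $y_1,\ldots,y_{n-1}$, and assemble the right-hand side from inside out. Your presentation is slightly more explicit about which containments use only $\gamma\comp\delta\subseteq\gamma\join\delta$ versus where permutability enters, but the mathematical content is identical.
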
 

\begin{proof} 
\begin{figure}[hbtp]
\centering
\begin{tikzpicture}[shorten >=1pt,node distance=2pt,inner
sep=1.5pt,auto,scale=2.8]
\node [circle,draw](u) at (-1,0){};
\node [circle,draw](z1) at (0,.1){};
\node [circle,draw](v) at  (1,0){};
\node [circle,draw](z0) at (0,-1){};
\node [circle,draw](z2) at (0,1){};
\node [circle,draw](z3) at (0,0.5){};
\node [circle,draw](z4) at (0,-0.2){};

\node [left=of u]{$a$};
\node [right=of v]{$b$};
\node [above=of z2]{$c_0$};
\node [below=of z0]{$c_{n-1}$};
\node [above=of z1]{$c_i$};
\node [above=of z3]{$c_1$};
\node [below=of z4]{$c_{i+1}$};

\draw (u) edge node[above=1pt]{$\alpha_i$} (z1)
          edge node[left=3pt]{$\alpha_{n-1}$} (z0)
          edge node[above=2pt]{$\alpha_0$} (z2)
          edge node[above=2pt]{$\alpha_1$} (z3)
          edge node[below=2pt]{$\alpha_{i+1}$} (z4)
          
      (z1) edge node[right=2pt]{$\gamma_i$} (z4)
          
      (v) edge node[above=2pt]{$\alpha'_0$} (z2)
          edge node[above=1pt]{$\alpha'_i$} (z1)
          edge node[right=3pt]{$\alpha'_{n-1}$} (z0)
          edge node[above=2pt]{$\alpha'_1$} (z3)
          edge node[below=2pt]{$\alpha'_{i+1}$} (z4);

\end{tikzpicture}
\caption{ \ }\label{fig:relations}
\end{figure}
Suppose that $\la a,b\ra\in
\bigwedge_{i=0}^{n-1}(\alpha_i\vee \alpha_i')$,
that is, $\la a,b\ra\in
\alpha_i\vee \alpha_i'$, for every~$i$.
Since $\alpha _i$ and $\alpha _i'$ permute for every 
$i$, this implies there exists $c_i$ such that 
$\la a, c_i\ra\in \alpha_i$ and
$\la c_i,b\ra\in \alpha_i'$.
Now let 
$\gamma_i = (\alpha _i\vee \alpha _{i+1}) \wedge (\alpha _i'\vee \alpha _{i+1}')$,
the element of $L$ corresponding to $y_i$. It follows
that $\la c_i,c_{i+1}\ra\in \gamma_i$. 
The indices are computed 
modulo $n$ so when $i = n-1$ we get 
$\la c_{n-1},c_0\ra\in \gamma_{n-1} = (\alpha _{n-1}\vee \alpha _0) \wedge
(\alpha _{n-1}'\vee \alpha _0')$.
These relations are indicated in Figure~\ref{fig:relations}.
Hence $c_1 \mathrel {\gamma_1} c_2 \mathrel{\gamma_2} c_3 \cdots c_{n-1}\mathrel{\gamma_{n-1}} c_0$,
and so $\la c_0,c_1\ra \in \gamma_1\join \cdots \join\gamma_{n-1}$. Thus
$\la c_0,c_1\ra \in (\alpha_0' \join \alpha_1')\meet\bigvee_{i=1}^{n-1} \gamma_i$
and so 
$\la a,c_0\ra\in \alpha_0\meet\big(\alpha_1 \join
   [(\alpha_0' \join \alpha_1')\meet\bigvee_{i=1}^{n-1} \gamma_i]\big)$.
Since $\la b, c_0\ra\in \alpha_0'$, we get
$\la b,a\ra\in 
\alpha_0' \vee
\big(\alpha_0\meet\big(\alpha_1 \join
   [(\alpha_0' \join \alpha_1')\meet\bigvee_{i=1}^{n-1} \gamma_i]\big)\big)$,
proving the lemma.
\end{proof} 

From Lemmas \ref{lem:M3-permute} and
\ref{Dninperm} we immediately get the following corollary.  

\begin{corollary}\label{cor:Dn-holds}
 Let $\alg A$ be an algebra
 with a weak difference term.
 Suppose $\alpha_i$ and $\alpha'_i \in \Con(\alg A)$, $i < n$, and also
 assume there is a congruence $\theta_i$ such that 
 $\alpha_i$, $\alpha'_i$ and $\theta_i$ are the atoms of a sublattice 
 of $\Con(\alg A)$ isomorphic to $\alg M_3$. Then \textup{($\text{D}_n^*$)}
 holds when $\alpha_i$ is substituted for $x_i$ and $\alpha'_i$ for $x'_i$.
\end{corollary}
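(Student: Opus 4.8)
The plan is to derive Corollary~\ref{cor:Dn-holds} by combining the two preceding results exactly as the statement ``From Lemmas~\ref{lem:M3-permute} and \ref{Dninperm} we immediately get'' advertises. The key observation is that Lemma~\ref{Dninperm} proves the instance of ($\text{D}_n^*$) for elements of a lattice of equivalence relations \emph{provided} each pair $\alpha_i,\alpha_i'$ permutes, and that permutability is precisely what Lemma~\ref{lem:M3-permute}(2) delivers for atoms of an $\alg M_3$.

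First I would observe that $\Con(\alg A)$ is itself a sublattice of the lattice of all equivalence relations on the underlying set $A$, so the congruences $\alpha_i,\alpha_i'$ may legitimately play the role of the elements $\alpha_i,\alpha_i'$ in the hypothesis of Lemma~\ref{Dninperm}. Next, for each fixed $i$, the hypothesis gives a congruence $\theta_i$ such that $\alpha_i,\alpha_i',\theta_i$ are the three atoms of a copy of $\alg M_3$ inside $\Con(\alg A)$. Since $\alg A$ has a weak difference term by assumption, Lemma~\ref{lem:M3-permute}(2) applies to this $\alg M_3$ and yields that $\alpha_i$ and $\alpha_i'$ permute, i.e.\ $\alpha_i\comp\alpha_i'=\alpha_i'\comp\alpha_i$. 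This holds for every $i<n$, so the permutability hypothesis of Lemma~\ref{Dninperm} is met for all $i$ simultaneously.

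Having verified that each pair $\alpha_i,\alpha_i'$ permutes, I would then invoke Lemma~\ref{Dninperm} directly with $\alg L=\Con(\alg A)$: it states exactly that the instance of ($\text{D}_n^*$) obtained by substituting $\alpha_i$ for $x_i$ and $\alpha_i'$ for $x_i'$, $i=0,\dots,n-1$, holds in $\alg L$. This is the desired conclusion, completing the proof. The argument requires no new computation, only the matching of hypotheses.

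There is genuinely no hard step here; the work has already been done in Lemmas~\ref{lem:M3-permute} and \ref{Dninperm}. The only point demanding a moment's care is confirming that Lemma~\ref{Dninperm} was stated for an arbitrary sublattice of an equivalence-relation lattice rather than for the full lattice, so that $\Con(\alg A)$ qualifies, and that its permutability hypothesis is required only pairwise (each $\alpha_i$ with its own $\alpha_i'$) rather than globally among all the $\alpha_i,\alpha_i'$; both are the case, so the corollary follows immediately.
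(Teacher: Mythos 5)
Your proposal is correct and is precisely the argument the paper intends: the paper gives no explicit proof, stating only that the corollary follows ``immediately'' from Lemmas~\ref{lem:M3-permute} and \ref{Dninperm}, and your two steps (permutability of each pair $\alpha_i,\alpha_i'$ from Lemma~\ref{lem:M3-permute}(2), then Lemma~\ref{Dninperm} applied to $\Con(\alg A)$ as a sublattice of the equivalence-relation lattice on $A$) are exactly that deduction.
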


If $\alg F$ is a skew field we let $\mathscr M_{\alg F}$ be the variety of left vector spaces
over~$\alg F$ and   $\mathscr M^{\text {fd}}_{\alg F}$ be the  class of all finite dimensional
left vector spaces over~$\alg F$.

\begin{lemma}\label{lem:HaimanLats}
 Let $\alg F$ be a field with $|F| > 2$ and let $\alg H_n(\alg F)$ be Haiman's lattice, $n\ge 3$.
 
\begin{enumerate}[\quad \normalfont(1)]
 \item Every proper sublattice of $\alg H_n(\alg F)$ can be embedded into the lattice
 of subspaces of a $2n$-dimensional vector space over $\alg F$.
 \item Every sublattice of $\alg H_n(\alg F)$ generated by less than $n$ elements is
 proper.
 \item There exist elements $x_i$ and $x_i'\in \alg H_n(\alg F)$, $i < n$, that generate $\alg H_n(\alg F)$
 and  witness the
 failure of 
 \textup{($\text{D}_n^*$)} in $\alg H_n(\alg F)$. Moreover, there are elements
 $x_i''\in \alg H_n(\alg F)$ such that $x_i$, $x_i'$ and $x_i''$ are the atoms of
 a sublattice isomorphic to $\alg M_3$. 
 \item 
 Let $\alg P$ be the prime subfield of $\alg F$. Then any nonprincipal ultraproduct of
 $\{\alg H_n(\alg F) : n = 3, 4, \ldots \}$ lies in $\SS\, \Con(\mathscr M_{\alg P})$.
\end{enumerate}
\end{lemma}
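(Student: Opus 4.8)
The plan is to prove the four parts of Lemma~\ref{lem:HaimanLats} by exploiting the known structural theory of Haiman's lattices $\alg H_n(\alg F)$, together with the commutator machinery assembled in the preliminaries. Let me sketch each part in turn.

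For part~(1), I would recall Haiman's construction: $\alg H_n(\alg F)$ is built as a kind of ``amalgam'' or gluing of subspace lattices so that it is modular (indeed Arguesian for $n \ge 4$) but fails to embed into any lattice of permuting equivalence relations. The key structural fact I would invoke is that $\alg H_n(\alg F)$ is generated by $2n$ elements $x_0, \ldots, x_{n-1}, x_0', \ldots, x_{n-1}'$ and that the ``missing'' relation making it non-representable involves all $n$ of the cyclically-indexed generators simultaneously. The plan is to argue that any \emph{proper} sublattice omits enough of this cyclic structure that the obstruction disappears, so that the sublattice becomes representable as a lattice of subspaces of a $2n$-dimensional $\alg F$-space. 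Concretely I would identify $\alg H_n(\alg F)$ with a sublattice of the subspace lattice $L(\alg F^{2n})$ of a $2n$-dimensional space with one extra top-level identification imposed, and show any proper sublattice already lives below that identification, hence embeds into $L(\alg F^{2n})$ itself.

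Part~(2) is essentially the contrapositive companion to part~(1): since $\alg H_n(\alg F)$ requires all $n$ cyclic generator-pairs to witness its defining failure, no fewer than $n$ elements can generate it. The plan is to show that any sublattice generated by fewer than $n$ elements avoids the full cycle $y_0, y_1, \ldots, y_{n-1}$ and therefore cannot recover the non-representable quotient, so it is proper. This should follow from a dimension or ``support'' count on which of the cyclic coordinates a set of fewer than $n$ generators can touch. For part~(3), I would point to Haiman's explicit generators $x_i, x_i'$ that realize the failure of ($\text{D}_n$), and then use Corollary~\ref{idequiv2} together with Theorem~\ref{idequiv} to transfer that failure to ($\text{D}_n^*$), since $\alg H_n(\alg F)$ is modular and the two identities are equivalent on modular lattices for each fixed assignment. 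The ``Moreover'' clause, producing $x_i''$ so that $x_i, x_i', x_i''$ form the atoms of an $\alg M_3$, is where the hypothesis $|F| > 2$ enters: with at least three elements in $\alg F$ one can choose a third ``diagonal'' subspace $x_i''$ complementing the pair, yielding a diamond $\alg M_3$ at the bottom; this is a standard construction in subspace lattices over fields with more than two elements.

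Part~(4) is the main payoff and, I expect, the hardest step. The goal is to place an arbitrary nonprincipal ultraproduct $\alg U = \prod_n \alg H_n(\alg F)/\mathcal{D}$ inside $\SS\,\Con(\mathscr M_{\alg P})$, the congruence prevariety of vector spaces over the prime subfield $\alg P$. The plan is as follows: by part~(1), each proper sublattice of $\alg H_n(\alg F)$ embeds into $L(\alg F^{2n})$, which is the congruence lattice of the $\alg F$-space $\alg F^{2n}$, hence lies in $\Con(\mathscr M_{\alg F})$; and by Lemma~\ref{ConV}(i) and the fact that congruence lattices of modules are subspace lattices, $\Con(\mathscr M_{\alg F}) \subseteq \SS\,\Con(\mathscr M_{\alg P})$ because an $\alg F$-space is in particular a $\alg P$-space and $\alg F$-subspaces are $\alg P$-subspaces. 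The delicate point is the \emph{ultraproduct} step: an arbitrary element of $\alg U$ is a $\mathcal{D}$-class of sequences, and I must show the finitely generated sublattices of $\alg U$ embed into an ultraproduct of the $L(\alg F^{2n})$. Here I would use part~(2): any sublattice of $\alg U$ generated by $k < \aleph_0$ elements is, on a $\mathcal{D}$-large set of coordinates $n > k$, generated by fewer than $n$ elements, hence \emph{proper} in $\alg H_n(\alg F)$ by part~(2), hence embeddable by part~(1). The hard part will be assembling these coordinatewise embeddings into a single embedding of the finitely generated sublattice of the ultraproduct — the embeddings into $L(\alg F^{2n})$ must be chosen compatibly enough that Ło\'s's theorem delivers an embedding into the ultraproduct $\prod_n L(\alg F^{2n})/\mathcal{D}$, which is itself a congruence lattice of a $\alg P$-space (an ultraproduct of $\alg F$-spaces is an $\alg F$-space) and so lies in $\SS\,\Con(\mathscr M_{\alg P})$. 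Since $\SS\,\Con(\mathscr M_{\alg P})$ is closed under $\SS$ and contains all finitely generated sublattices of $\alg U$, and membership in a prevariety is determined by finitely generated sublattices, we conclude $\alg U \in \SS\,\Con(\mathscr M_{\alg P})$.
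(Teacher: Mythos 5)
For parts (1) and (2) the paper simply cites Haiman's Theorems 2 and 3; your sketches of why these hold (``any proper sublattice lives below the extra identification'', ``a support count on the cyclic coordinates'') are plausibility arguments, not proofs, and these are genuinely nontrivial structural theorems about $\alg H_n(\alg F)$. Part (3) matches the paper: Haiman's generators witness the failure of ($\text{D}_n$), Corollary~\ref{idequiv2} transfers this to ($\text{D}_n^*$) by modularity, and the third atom $x_i''$ comes from the fact that $x_i,x_i'$ are atoms of an interval isomorphic to the subspace lattice of a $3$-dimensional $\alg F$-space, where the join of two atoms always contains a third.

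The genuine gap is in part (4), at the very last step. You reduce correctly to showing that every finitely generated sublattice of the ultraproduct lies in $\SS\,\Con(\mathscr M_{\alg P})$ (and the ``hard part'' you flag --- assembling the coordinatewise embeddings --- is in fact routine: take the ultraproduct of the embeddings $S_n\mono L(\alg F^{2n})$). But you then conclude with ``membership in a prevariety is determined by finitely generated sublattices,'' and this is \emph{not} a valid general principle for classes of the form $\SS\PP(\mathscr K)$. Locality of this kind holds for quasivarieties, i.e.\ for classes that are additionally closed under ultraproducts, and the paper itself stresses that $\SS\,\Con(\mathscr V)$ can fail to be elementary (e.g.\ for semilattices). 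The missing ingredient is precisely the theorem of Herrmann--Poguntke and Makkai--McNulty that $\SS\,\Con(\mathscr M_{\alg P})$ \emph{is} closed under ultraproducts, hence a quasivariety axiomatized by a set $\Phi$ of quasi-equations. The paper's argument then runs through $\Phi$: a quasi-equation $\phi\in\Phi$ in $n$ variables holds in $\alg H_m(\alg F)$ for all $m>n$ (any substitution instance lives in a sublattice generated by fewer than $m$ elements, hence proper by (2), hence embeddable in a subspace lattice over $\alg P$ by (1)), so $\phi$ holds in every nonprincipal ultraproduct by {\L}o\'s. You also quietly use the same closure fact a second time when you assert that $\prod_n L(\alg F^{2n})/\mathcal D$ lies in $\SS\,\Con(\mathscr M_{\alg P})$ (though that particular instance can be patched by exhibiting a direct embedding of the ultraproduct of subspace lattices into the subspace lattice of the ultraproduct vector space). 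Without the ultraproduct-closure theorem, your final inference does not go through.
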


\begin{proof}
 (1) and (2) are Theorems 2 and 3  of \cite{Haiman1991}, respectively.
 Haiman's paper defines $x_i$ and $x_i'$ in $\alg H_n(\alg F)$ 
 to be atoms of an interval, $I[p_i,r_i]$,
 isomorphic to the lattice of subspaces of a three dimensional vector space over $\alg F$. 
 In such an interval the join of any two distinct  atoms contains a third atom and 
 (3) follows.
Haiman deals with the identity \textup{($\text{D}_n$)}, instead,
but we can equivalently use \textup{($\text{D}_n^*$)} in view
of Corollary \ref{idequiv2} and since Haiman's lattices $\alg H_n(\alg F)$
are modular. 
 
 $\SS\, \Con(\mathscr M_{\alg P})$ is a quasivariety, that is, it is defined by quasi-equations:
 by Lemma~\ref{ConV} it is closed under $\SS$ and $\PP$. In addition this class is 
 closed under ultraproducts; see \cite{HerrmannPoguntke1974} and \cite{MakkaiMcNulty1977}. See
 the proof of Theorem~2.1(2) of \cite{KearnesNation2008} for a very approachable proof.
 Consequently by Theorem~8.105 
 of \cite{FreeseMcKenzieMcNultyTaylor2022}, $\SS\, \Con(\mathscr M_{\alg P})$ is
 a quasivariety and so is defined by a set $\Phi$ of quasi-equations.
 (We point out that it is 
 not always the case that $\SS\, \Con(\mathscr V)$ is a 
 quasivariety for a variety $\mathscr V$: Kearnes and Nation \cite{KearnesNation2008} show, for example, that
 if $\mathscr V$ has a Taylor term but does not have a Hobby-McKenzie term, then 
 $\SS\, \Con(\mathscr V)$ is not a quasivariety. So for example the class of lattices embeddable
 into the congruence lattice of a semilattice is not elementary.)
 Let $\phi\in\Phi$. By (1) and (2) of this lemma
and the fact that the lattice of subspaces of a vector space over  $\alg F$ 
 is embedded into the lattice
 of subspaces of a vector space over its prime subfield,
 we see that if $\phi$ has at most $n$ variables, 
 then it holds in $\alg H_m(\alg F)$ for all $m > n$. So $\phi$ holds in all but
 finitely many $\alg H_m(\alg F)$'s. Thus $\phi$ holds in any nonprincipal 
 ultraproduct of the $\alg H_m(\alg F)$'s, proving (4).
\end{proof}

\section{Projectivity of $\alg M_3$ in congruence varieties}
 
In order to use the results from the previous sections to prove 
that no congruence variety, other than the variety of all lattices,
contains any $\alg H_n(\alg F)$ (that is to prove Theorem~\ref{thm:main}($1'$)), 
we will show that $\alg M_3$ is projective for every proper congruence variety.
To make this precise we need a definition. 
If $\alg L$ is any lattice, we say that
\emph{$\alg M_3$ is projective for $\alg L$}
if and only if whenever 
$\varphi :\alg L\onto \alg M_3$ 
is an epimorphism, then $\alg L$
contains a sublattice isomorphic to $\alg M_3$ 
which maps to $\alg M_3$ under $\varphi $.
We say that $\alg M_3$ is projective for a class $\mathscr X$ of 
lattices if 
$\alg M_3$ is projective for every member of $\mathscr X$.
If $\alg L$ does not have $\alg M_3$ as  homomorphic image, then 
$\alg M_3$ is projective for $\alg L$ vacuously. 
$\alg M_3$ is projective for the class of
modular lattices as follows from Dedekind's description of the 
free modular lattice on three generators, \cite{Dedekind1900}.
On the other hand, $\alg M_3$ is not projective for the class
of all lattices. This section will show that $\alg M_3$ is projective for 
every congruence variety except for the variety
of all lattices. 

\begin{theorem}\label{thm:m3projective}
Let $\mathscr K$ be the congruence variety of a variety $\mathscr V$. 
Then $\alg M_3$ is projective for $\mathscr K$ if and only if $\mathscr K$
is not the variety of all lattices.
\end{theorem}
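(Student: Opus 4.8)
The plan is to prove the two implications separately. For the easy direction, suppose $\mathscr K$ is the variety of all lattices. As recalled above, $\alg M_3$ is not projective for the class of all lattices, so there is a lattice $\alg L$ and an epimorphism $\varphi\colon\alg L\onto\alg M_3$ such that no sublattice of $\alg L$ isomorphic to $\alg M_3$ maps onto $\alg M_3$ under $\varphi$; since $\alg L\in\mathscr K$, this shows $\alg M_3$ is not projective for $\mathscr K$. Thus it remains to prove the substantive direction: if $\mathscr K$ is a proper congruence variety, then $\alg M_3$ is projective for $\mathscr K$.

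So assume $\mathscr K=\VV\,\Con(\mathscr V)$ is not the variety of all lattices. By Theorem~\ref{kknontrivialid}, $\mathscr V$ has a weak difference term and its congruence lattices satisfy the identity $\beta^m=\beta^{m+1}$ of part (ii) together with the abelian-interval condition of part (iii). My first step is a reduction to sublattices of congruence lattices. Projectivity of $\alg M_3$ is inherited by homomorphic images: if $\pi\colon\alg B\onto\alg L$ and $\varphi\colon\alg L\onto\alg M_3$ are epimorphisms and $\alg N\le\alg B$ is a copy of $\alg M_3$ carried isomorphically to $\alg M_3$ by $\varphi\comp\pi$, then $\pi$ is injective on $\alg N$ (its composite with $\varphi$ is), so $\pi(\alg N)\cong\alg M_3$ and $\varphi$ maps $\pi(\alg N)$ onto $\alg M_3$, giving the required copy in $\alg L$. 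Since $\mathscr K=\HH\SS\,\Con(\mathscr V)$ by Lemma~\ref{ConV}(ii), it therefore suffices to prove that $\alg M_3$ is projective for every $\alg L\in\SS\,\Con(\mathscr V)$, that is, for every sublattice $\alg L$ of a congruence lattice $\Con(\alg A)$ with $\alg A\in\mathscr V$.

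Fix such an $\alg L\le\Con(\alg A)$ and an epimorphism $\varphi\colon\alg L\onto\alg M_3$, and choose $\alpha,\beta,\gamma\in\alg L$ whose images under $\varphi$ are the three atoms of $\alg M_3$. Form the two medians $\delta=(\alpha\meet\beta)\join(\beta\meet\gamma)\join(\gamma\meet\alpha)$ and $\mu=(\alpha\join\beta)\meet(\beta\join\gamma)\meet(\gamma\join\alpha)$, which lie in $\alg L$ and satisfy $\varphi(\delta)=0$ and $\varphi(\mu)=1$; the interval $L'=\{x\in\alg L:\delta\le x\le\mu\}$ still maps onto $\alg M_3$ under $\varphi$, since it contains an element mapping to each atom, for instance $(\alpha\join\delta)\meet\mu$. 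The crux is to show $L'$ is modular; in fact I aim to show the interval $I[\delta,\mu]$ computed in $\Con(\alg A)$ is solvable, so that both it and its sublattice $L'$ are modular by Theorem~\ref{wdt}(ii). Granting this, the projectivity of $\alg M_3$ for modular lattices (Dedekind's description of the free modular lattice on three generators, as recalled above) applies to the modular lattice $L'$ together with the epimorphism $\varphi|_{L'}\colon L'\onto\alg M_3$, producing a sublattice of $L'$ isomorphic to $\alg M_3$ and mapped onto $\alg M_3$ by $\varphi$. As $L'\subseteq\alg L$, this is the desired copy of $\alg M_3$, completing the proof.

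The main obstacle is exactly the solvability of $I[\delta,\mu]$, and this is where the commutator hypotheses are essential: $\alg L$ itself need not be modular, so modularity cannot come from lattice theory alone. To obtain solvability I would first use the congruence identity $\beta^m=\beta^{m+1}$ of Theorem~\ref{kknontrivialid}(ii) to replace $\beta$ and $\gamma$ by lattice-term modifications for which the $\alg M_3$-relation $\alpha\join\beta=\alpha\join\gamma$ holds literally, rather than merely after applying $\varphi$. Theorem~\ref{kknontrivialid}(iii) then forces an interval of the form $I[\alpha\join(\beta\meet\gamma),\,\alpha\join\beta]$ to be abelian, and I would transport abelianness, hence solvability, onto $I[\delta,\mu]$ using Theorem~\ref{wdt}(i), that abelian and solvable intervals are preserved under transpositions. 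Verifying that the forced equalities can indeed be arranged by lattice terms in $\alpha,\beta,\gamma$, and that the abelian pieces so obtained cover the whole median interval, is the delicate part of the argument and the step I expect to require the most care.
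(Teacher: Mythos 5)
Your overall strategy is the paper's own: reduce via Lemma~\ref{ConV}(ii) to sublattices of congruence lattices, normalize the pre-images of the atoms using Theorem~\ref{kknontrivialid}(ii) so that $\alpha\join\beta=\alpha\join\gamma$ and $\beta\meet\gamma\le\alpha$, invoke Theorem~\ref{kknontrivialid}(iii) to produce an abelian interval, transport solvability with Theorem~\ref{wdt}(i), get modularity from Theorem~\ref{wdt}(ii), and finish with Dedekind's projectivity of $\alg M_3$ in modular lattices. Your explicit reduction to $\SS\,\alg{Con}(\mathscr V)$ and your treatment of the easy direction are both fine, and your median interval $I[\delta,\mu]$ coincides, after the normalization, with the interval the paper uses: $\beta\meet\gamma\le\alpha$ gives $\delta=(\alpha\meet\beta)\join(\alpha\meet\gamma)$, and $\alpha\join\beta=\alpha\join\gamma$ gives $\mu=\beta\join\gamma$.

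The gap is precisely the step you flag at the end and do not carry out: you assert that abelianness of $I[\alpha,\alpha\join\beta\join\gamma]$ can be transported onto the median interval, but a single application of Theorem~\ref{wdt}(i) does not suffice, because that interval is in general only solvable (of class $2$), not abelian, so one must assemble a two-step chain. Concretely: meet the abelian interval $I[\alpha,\alpha\join\beta\join\gamma]$ with $\beta$ to get $I[\alpha\meet\beta,\beta]$ abelian, and with $\gamma$ to get $I[\alpha\meet\gamma,\gamma]$ abelian; then join the first with $\alpha\meet\gamma$ to get $I[(\alpha\meet\beta)\join(\alpha\meet\gamma),\,\beta\join(\alpha\meet\gamma)]$ abelian, and join the second with $\beta$ to get $I[\beta\join(\alpha\meet\gamma),\,\beta\join\gamma]$ abelian; stacking these two abelian intervals yields solvability of $I[\delta,\mu]$, whence modularity by Theorem~\ref{wdt}(ii). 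You should also supply the two small verifications in the normalization step: the equality $\alpha\join\beta^m=\alpha\join\gamma^m$ follows from $\beta^m=\beta^{m+1}=\beta\meet(\alpha\join\gamma^m)\le\alpha\join\gamma^m$ together with the symmetric inequality, and an easy induction on \eqref{beta-m} shows $\beta^n$ and $\gamma^n$ still map to the correct atoms of $\alg M_3$. With these details filled in, your argument becomes the paper's proof.
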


\begin{proof}
First assume  $\mathscr K$ is the variety of all lattices. Then it contains the free lattice on three generations
which has a homomorphism onto $\alg M_3$. But $\alg M_3$ is not a sublattice of the 
the free lattice, see \cite{FreeseJezekNation1995},
so it is not projective for $\mathscr K$.

 Now assume that $\mathscr K$ is not the variety of all lattices. Let 
 $\alg L\in\mathscr K$ and suppose $\varphi :\alg L\onto \alg M_3$ 
is an epimorphism. By Lemma~\ref{ConV}(ii) there is a lattice 
$\alg L' \in \SS\, \alg{Con}(\alg A)$ and  an epimorphism
$\psi: \alg L'\onto \alg L$, for some $\alg A\in\mathscr V$. Summarizing:
\[ 
\alg L'\stackrel{\psi}{\twoheadrightarrow} \alg L    \stackrel{\varphi}{\twoheadrightarrow} \alg M_3
\quad
\text{with}
\quad
\alg L' \le \alg{Con}(\alg A).
\]
Let $a$, $b$ and $c$ be the atoms of $\alg M_3$. Choose 
$\alpha$, $\beta$ and $\gamma\in \alg L'$ to be pre-images of $a$, $b$ and $c$
under $\psi \comp \varphi$.

By Theorem~\ref{kknontrivialid}(ii) (and the symmetry between $\beta$ and $\gamma$) 
there is an $m$ such that $\beta^{m+1} = \beta^m$ and
$\gamma^{m+1} = \gamma^m$, where $\beta^n$ and $\gamma^n$ are defined by~\eqref{beta-m}.
Then
\[
\beta^m = \beta^{m+1} = \beta\meet (\alpha \join \gamma^m) \le \alpha \join \gamma^m,
\]
and so $\alpha \join\beta^m \le \alpha \join \gamma^m$, and by symmetry, 
$\alpha \join\beta^m = \alpha \join \gamma^m$.
Now an easy induction shows $(\phi\comp\varphi)(\beta^n) = b$ 
and $(\phi\comp\varphi)(\gamma^n) = c$ for all $n$. So, changing notation,
we may assume $\alpha$, $\beta$ and $\gamma$ are pre-images of $a$, $b$ and $c$ 
and that $\alpha\join\beta = \alpha\join\gamma$.
Now let $\alpha' = \alpha\join(\beta\meet\gamma)$ and note that $\alpha'$ is a pre-image
of $a$ and that $\alpha'\join\beta = \alpha'\join\gamma$ so, with another change in notation,
we may assume our pre-images  satisfy 
\[
\alpha\join\beta = \alpha\join\gamma \text{ \ and \ } 
\beta\meet\gamma \le \alpha.
\]
By Theorem~\ref{kknontrivialid}(iii) the interval $I[\alpha,\alpha\join\beta\join\gamma]$ is abelian.

By Theorem~\ref{kknontrivialid}(i) $\mathscr V$ has a weak difference term so by
Theorem~\ref{wdt}(i)  meeting with $\beta$ we have 
that $I[\alpha\meet\beta, (\alpha\join\beta\join\gamma)\meet\beta] = I[\alpha\meet\beta, \beta]$
is abelian. 
Similarly, $I[\alpha\meet\gamma, \gamma]$ is abelian.
Joining the first of these with $\alpha\meet\gamma$ yields 
that $I[(\alpha\meet\beta)\join(\alpha\meet\gamma), \beta\join(\alpha\meet\gamma)]$
is abelian. Joining the second with $\beta$ 
gives $I[\beta\join(\alpha\meet\gamma), \beta\join\gamma]$ is abelian. So we have
the chain
\[
(\alpha\meet\beta)\join(\alpha\meet\gamma) \le \beta\join(\alpha\meet\gamma) \le \beta\join\gamma,
\]
and it follows that the 
interval $I[(\alpha\meet\beta)\join(\alpha\meet\gamma), \beta\join\gamma]$ is solvable
and so by Theorem~\ref{wdt}(ii) it is modular. Let
\begin{align*}
\alpha' &= \alpha\meet(\beta\join\gamma), \\
\beta' &= \beta\join(\alpha\meet\gamma),\\
\gamma' &= \gamma\join(\alpha\meet\beta),
\end{align*}
and note these lie in the interval and are pre-images of $a$, $b$ and $c$, respectively.
Now the desired result follows from the projectivity of $\alg M_3$ in the variety of modular lattices.
Explicitly, defining 
\begin{align*}
\alpha'' &= (\alpha'\meet(\beta'\join\gamma'))\join(\beta'\meet\gamma') = \alpha'\join(\beta'\meet \gamma')\\
\beta'' &= (\beta'\meet(\alpha'\join\gamma'))\join(\alpha'\meet\gamma') = \beta'\meet(\alpha'\join\gamma')\\
\gamma'' &= (\gamma'\meet(\alpha'\join\beta'))\join(\alpha'\meet\beta') = \gamma'\meet(\alpha'\join\beta')
\end{align*}
we have that $\alpha''$, $\beta''$ and $\gamma''$ 
generate a sublattice which is isomorphic to $\alg M_3$ and maps onto $\alg M_3$ 
under $ \psi\comp\varphi$.
\end{proof}

\section{Proof of Theorem~\ref{thm:main}($\op{1}'$)}
Let $\alg L = \alg H_n(\alg F)$ be one of Haiman's lattices and suppose $\alg L$ lies in the congruence
variety $\mathscr K$ associated with a variety $\mathscr V$ and that $\mathscr K$
is not the variety of all lattices. 
By Lemma~\ref{ConV}(ii) $\alg L$ is a homomorphic image of a lattice $\alg L'$ which is a 
sublattice of $\alg{Con}(\alg A)$ for some $\alg A\in\mathscr V$. Let $\psi : \alg L' \onto \alg L$
be this epimorphism.
By Lemma~\ref{lem:HaimanLats}(3)
there  are elements $x_i$, $x_i'$ and $x_i''$,  $i < n$,  of $\alg L$ 
which are atoms of a 
sublattice  isomorphic to $\alg M_3$ such that
$x_i$ and $x_i'$ generate $\alg L$ and 
witness a failure of ($\text{D}^*_n$) in $\alg L$.
Since $\alg M_3$ is projective for $\mathcal K$ by 
Theorem~\ref{thm:m3projective}
there are elements $\overline{x_i}$, $\overline{x_i}'$  and   $\overline{x_i}''$ of $\alg L'$
which map under $\psi$ to $x_i$, $x_i'$ and $x_i''$ and are the atoms of
a sublattice isomorphic to $\alg M_3$.  Corollary~\ref{cor:Dn-holds} implies that ($\text{D}^*_n$) holds for 
these $\overline {x_i}$ and $\overline{x_i}'$, $i< n$. Applying $\psi$ we get
that ($\text{D}^*_n$) holds in $\alg L$ for $x_i$ and $x_i'$. This  contraction 
completes the proof.
\qed\medskip

As mentioned just before Theorem~\ref{idequiv},
($\text{D}^*_3)$ is equivalent to the arguesian law 
so, in a nondesaguesian projective plane, there are points $x_i$ and $x_i'$, $i = 0,1,2$, 
which witness the failure of   ($\text{D}^*_3)$. Since every line in a projective plane
has at least 3 points, there are points $x_i''$,  $i = 0,1,2$, such that $x_i$, $x_i'$ and $x_i''$
are the atoms of sublattice isomorphic to $\alg M_3$ in the lattice of subspaces of the
plane. Now using the arguments just above we get the following theorem.

\begin{theorem}
 Let $\alg L$ be the lattice of subspaces of a nonarguesian projective plane. Then 
 $\alg L$ lies in no congruence variety other than the variety of all lattices.\qed
\end{theorem}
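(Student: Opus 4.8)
The plan is to run the proof of Theorem~\ref{thm:main}($1'$) with $n = 3$, using the subspace lattice of the plane in place of Haiman's lattice $\alg H_n(\alg F)$. So I would suppose, toward a contradiction, that $\alg L$ lies in a congruence variety $\mathscr K$ of a variety $\mathscr V$, with $\mathscr K$ not the variety of all lattices. By Lemma~\ref{ConV}(ii), $\alg L$ is a homomorphic image of some $\alg L' \le \alg{Con}(\alg A)$ with $\alg A \in \mathscr V$; fix the epimorphism $\psi : \alg L' \onto \alg L$.

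The only genuinely new input is the analogue of Lemma~\ref{lem:HaimanLats}(3), and it comes from projective geometry. Since $\alg L$ is the subspace lattice of a nonarguesian plane, it fails the arguesian identity, and by the equivalence of that identity with ($\text{D}_3^*$) there are points $x_i$, $x_i'$, $i = 0,1,2$, witnessing the failure of ($\text{D}_3^*$) in $\alg L$. These points are atoms of $\alg L$, and for each $i$ the distinct atoms $x_i$, $x_i'$ join to the line $x_i \vee x_i'$, which carries a third point $x_i''$ because every line of a projective plane has at least three points; then $x_i$, $x_i'$, $x_i''$ are the atoms of a copy of $\alg M_3$. This reproduces exactly the configuration that Lemma~\ref{lem:HaimanLats}(3) furnishes in the Haiman case.

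From here the argument is a transcription. Since $\mathscr K$ is not the variety of all lattices, $\alg M_3$ is projective for $\mathscr K$ by Theorem~\ref{thm:m3projective}, so each $\alg M_3$ with atoms $x_i$, $x_i'$, $x_i''$ lifts through $\psi$ to an $\alg M_3$ in $\alg L'$ with atoms $\overline{x_i}$, $\overline{x_i}'$, $\overline{x_i}''$. As $\mathscr V$ has a weak difference term by Theorem~\ref{kknontrivialid}(i), Corollary~\ref{cor:Dn-holds} with $n = 3$ forces ($\text{D}_3^*$) to hold for $\overline{x_i}$, $\overline{x_i}'$ in $\alg L' \le \alg{Con}(\alg A)$. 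Applying $\psi$ then yields ($\text{D}_3^*$) for the original $x_i$, $x_i'$ in $\alg L$, contradicting their choice.

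The real content lives entirely in the second paragraph: the identification of ($\text{D}_3^*$) with the arguesian law, which turns nonarguesianness into a failure of ($\text{D}_3^*$), together with the incidence fact that every line carries at least three points, which lets that failure be seated among the atoms of an $\alg M_3$. Since the subspace lattice of a plane is modular, Corollary~\ref{idequiv2} applies and no care is needed when translating between ($\text{D}_3$) and ($\text{D}_3^*$). I do not expect any obstacle beyond checking these two geometric observations.
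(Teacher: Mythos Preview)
Your proposal is correct and matches the paper's own argument essentially line for line: the paper likewise observes that ($\text{D}_3^*$) is equivalent to the arguesian law, takes points $x_i$, $x_i'$ witnessing its failure, uses the fact that every line of a projective plane has at least three points to supply $x_i''$ completing each $\alg M_3$, and then invokes the proof of Theorem~\ref{thm:main}($1'$) verbatim. There is nothing to add.
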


The arguments above also give the following theorem which has a weaker hypothesis but also a
weaker conclusion. It 
should be compared to the concept
of omitted lattices of \S4.3 of \cite{KearnesKiss2013}.

\begin{theorem}
 Let $\alg L$ be one of Haiman's lattices or the lattice of subspaces of a nonarguesian projective plane.
 Let $\mathscr V$ be a variety with a weak difference term. Then $\alg L\notin \SS\,\alg{Con}(\mathscr V)$.\qed
\end{theorem}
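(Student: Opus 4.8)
The plan is to re-run the proof of Theorem~\ref{thm:main}($1'$), but to exploit that membership in the prevariety $\SS\,\alg{Con}(\mathscr V)$ supplies a genuine \emph{embedding} $\alg L\le\Con(\alg A)$ rather than merely a homomorphic image $\alg L' \onto \alg L$. This is exactly the feature that lets us weaken the hypothesis from ``$\mathscr V$ satisfies a nontrivial congruence identity'' all the way down to ``$\mathscr V$ has a weak difference term'': in the embedding setting we no longer need the projectivity of $\alg M_3$ (Theorem~\ref{thm:m3projective}, which itself leaned on a proper congruence variety) to lift the relevant copies of $\alg M_3$, because those copies already sit inside $\Con(\alg A)$ as sublattices. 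Everything will then collapse onto a single application of Corollary~\ref{cor:Dn-holds}, whose only standing assumption is the weak difference term.

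First I would argue by contradiction, assuming $\alg L\in\SS\,\alg{Con}(\mathscr V)$ and identifying $\alg L$ with a sublattice of $\Con(\alg A)$ for some $\alg A\in\mathscr V$. Since $\mathscr V$ has a weak difference term $d$ and $\alg A\in\mathscr V$, the single algebra $\alg A$ has $d$ as a weak difference term. When $\alg L=\alg H_n(\alg F)$, Lemma~\ref{lem:HaimanLats}(3) furnishes elements $x_i$, $x_i'$ that witness the failure of ($\text{D}_n^*$) in $\alg L$, together with $x_i''$ so that $x_i$, $x_i'$, $x_i''$ are the atoms of a sublattice isomorphic to $\alg M_3$. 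Because the inclusion $\alg L\le\Con(\alg A)$ preserves meets and joins, each of these triples remains an $\alg M_3$ inside $\Con(\alg A)$, so the hypotheses of Corollary~\ref{cor:Dn-holds} are met with $\alpha_i=x_i$, $\alpha_i'=x_i'$, $\theta_i=x_i''$. That corollary then forces ($\text{D}_n^*$) to hold for the $x_i$, $x_i'$ computed in $\Con(\alg A)$; since $\alg L$ is a sublattice, both sides of ($\text{D}_n^*$) take the same value whether evaluated in $\Con(\alg A)$ or in $\alg L$, so ($\text{D}_n^*$) holds for these elements in $\alg L$, contradicting the chosen failure.

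For the lattice of subspaces of a nonarguesian projective plane the argument is parallel with $n=3$. By the discussion preceding Theorem~\ref{idequiv}, ($\text{D}_3^*$) is equivalent to the arguesian law, so the nonarguesian hypothesis is precisely a failure of ($\text{D}_3^*$); the paragraph immediately before this theorem produces the witnessing points $x_i$, $x_i'$ and the third atoms $x_i''$ forming copies of $\alg M_3$ in the subspace lattice. The same Corollary~\ref{cor:Dn-holds} contradiction then applies verbatim, again using only the weak difference term on $\alg A$.

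I expect the sole point meriting care to be the verification that the $\alg M_3$-sublattices of $\alg L$ are carried to honest $\alg M_3$-sublattices of $\Con(\alg A)$ and that the instances of ($\text{D}_n^*$) transfer back along the embedding; but both are immediate from the fact that a lattice embedding preserves the operations. It is exactly here that the weaker ``$\SS$'' conclusion earns the weaker ``weak difference term'' hypothesis: no analogue of the delicate abelian/solvable reduction to a modular interval used in Theorem~\ref{thm:m3projective} is required, and in particular Theorem~\ref{kknontrivialid} is never invoked.
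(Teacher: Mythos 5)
Your proposal is correct and is essentially the argument the paper intends when it says ``the arguments above also give the following theorem'': with a genuine embedding $\alg L\le\Con(\alg A)$ the $\alg M_3$'s of Lemma~\ref{lem:HaimanLats}(3) (resp.\ those in the subspace lattice of the plane) are already sublattices of $\Con(\alg A)$, so the projectivity step of Theorem~\ref{thm:m3projective} is unnecessary and Corollary~\ref{cor:Dn-holds} alone yields the contradiction with the chosen failure of ($\text{D}_n^*$). You have also correctly identified why the hypothesis can be weakened to a weak difference term.
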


\section{Lattices in $\SS\,\alg{Con}(\mathscr V)$}\label{sec:embed}

Before getting to the proof of Theorem~\ref{thm:main}($\text{2}'$) we prove some interesting 
embedding theorems. For example we show there is a large class of modular lattices, $\mathscr K_\infty$,
all of  which 
can be embedded into a member of
$\alg{Con}(\mathscr V)$
as long as $\mathscr V$ is not 
congruence meet semidistributive and has a weak difference term.

\begin{theorem}\label{thm:embedding}
 Let $\mathscr V$ be a variety having a weak difference term, and  assume
 that $\mathscr V$ is not congruence meet semidistributive.
 Then there is a prime field\/ $\alg P$ such that, for  every finite $n$, the
  lattice of subspaces of 
 a vector space over $\mathbf P$ of dimension $n$ lies in $\SS\,\alg{Con}(\mathscr V)$.
\end{theorem}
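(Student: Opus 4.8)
The plan is to extract, from the failure of congruence meet semidistributivity, a single nonzero module, to read the prime field $\alg P$ off a simple subquotient of that module once and for all, and then to let the module grow by passing to finite powers of a fixed algebra of $\mathscr V$. To begin I would produce a nontrivial abelian interval. For a variety with a weak difference term, failure of congruence meet semidistributivity is equivalent to the failure of congruence neutrality (see \cite{KearnesKiss2013,Lipparini1994}); thus there are an algebra $\alg A \in \mathscr V$ and congruences $\alpha,\beta$ of $\alg A$ with $[\alpha,\beta] < \alpha\meet\beta$. Putting $\theta = \alpha\meet\beta$ and using monotonicity of the commutator, $[\theta,\theta]\le[\alpha,\beta]<\theta$; replacing $\alg A$ by $\alg A/[\theta,\theta]$ and recalling that $I[\beta',\alpha']$ is abelian iff $[\alpha',\alpha']\le\beta'$ in this setting, I may assume $I[0,\theta]$ is abelian with $\theta>0$. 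By Theorem~\ref{wdt} this interval is modular and consists of permuting congruences, and by the affine (module) representation of abelian intervals in varieties with a weak difference term \cite{Lipparini1994} it is isomorphic to the lattice $\op{Sub}(M)$ of submodules of a nonzero module $M$ over a ring $R$ with $1\neq 0$.

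Since $R$ has $1\neq 0$, the nonzero module $M$ has a simple subquotient $S$ (take a maximal submodule of a nonzero cyclic submodule). By Schur's Lemma $D := \op{End}_R(S)$ is a division ring, and for every $n$ the submodule lattice $\op{Sub}(S^n)$ is isomorphic to the lattice of subspaces of the $n$-dimensional vector space $D^n$. I now fix $\alg P$ to be the prime subfield of $D$; it is either $\mathbb F_p$ or $\mathbb Q$, and, being determined by $M$ alone, it does not depend on $n$. The eventual return to $\alg P$ is by extension of scalars: as $\alg P$ is a central subfield of $D$, the functor $V\mapsto D\otimes_{\alg P}V$ sends $\alg P^n$ to $D^n$ and, $D$ being flat over the field $\alg P$, preserves both intersections and sums of subspaces while remaining injective (one recovers $V$ as $(D\otimes_{\alg P}V)\meet \alg P^n$). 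Hence the subspace lattice of $\alg P^n$ embeds as a sublattice into $\op{Sub}(D^n)\iso\op{Sub}(S^n)$.

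It remains to realize $\op{Sub}(S^n)$ inside an honest congruence lattice of $\mathscr V$. For this I pass to the power $\alg A^n\in\mathscr V$ (closure under $\SS$ and $\PP$ is all that is used). Writing $\bar 0$ for equality and $\bar\theta$ for the $n$-fold product of $\theta$, the interval $I[\bar 0,\bar\theta]$ of $\alg{Con}(\alg A^n)$ is again abelian, since the centrality relation is preserved under products, and so is coordinatized by a module $M_n$. The coordinate congruences $\theta^{(i)}$ (equality off coordinate $i$, and $\theta$ on coordinate $i$) form an independent family in $I[\bar 0,\bar\theta]$ whose join is $\bar\theta$, so the corresponding submodules $N_i$ of $M_n$ are independent with $\sum_i N_i = M_n$, giving $M_n \iso \bigoplus_i N_i$; and the transposition $I[\bar 0,\theta^{(i)}]\nearrow I[\eta_i,\theta^{(i)}\join\eta_i]$, where $\eta_i$ is the kernel of the $i$-th projection, together with that projection $\alg A^n\onto\alg A$, identifies each $N_i$ with $M$. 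Thus $M_n\iso M^n$, the subquotient $S^n$ of $M^n$ corresponds to a subinterval of $I[\bar 0,\bar\theta]\subseteq\alg{Con}(\alg A^n)$ isomorphic to $\op{Sub}(S^n)$, and composing with the embedding of the previous paragraph places the subspace lattice of $\alg P^n$ into $\alg{Con}(\alg A^n)$, as required.

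The main obstacle is the module bookkeeping of the last step, namely the identification $M_n\iso M^n$: one must know not merely that the abelian interval of the power is coordinatized by \emph{some} module, but that this module is the $n$-th power of the fixed module $M$, so that all of the skew (diagonal) submodules responsible for the non-distributive part of $\op{Sub}(D^n)$ genuinely occur as congruences of $\alg A^n$, and so that the division ring $D$ (hence $\alg P$) is the same for every $n$. This rests on the compatibility of the affine coordinatization with products and with the transpositions of Theorem~\ref{wdt}(i), which is exactly where the weak-difference-term commutator calculus does the real work.
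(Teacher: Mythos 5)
Your strategy---coordinatize the abelian interval by a module, pin down the division ring $D$ and its prime subfield $\alg P$ via a simple subquotient $S$, and then blow the picture up by passing to powers of $\alg A$---founders exactly at the point you flag as ``the main obstacle,'' and the obstacle is not mere bookkeeping: the step is false as stated. You need the interval $I[\bar 0,\bar\theta]$ of $\Con(\alg A^n)$ to be the \emph{full} submodule lattice of $M^n$, so that the skew (diagonal) submodules of $S^n$ are realized by congruences of the direct power $\alg A^n$. Take $\alg A=S_3$ in the variety of groups (congruence permutable, so certainly a weak difference term) and let $\theta$ be the congruence of $A_3$; then $\theta$ is abelian and $I[0,\theta]\iso\op{Sub}(\mathbb F_3)$, but in $S_3\times S_3$ the diagonal subgroup of $A_3\times A_3$ is not normal (conjugating $(x,x)$ by $(g,1)$ with $g$ a transposition inverts only the first coordinate), so $I[\bar 0,\theta\times\theta]$ is the four-element Boolean lattice rather than $\op{Sub}(\mathbb F_3^2)\iso\alg M_4$. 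The underlying reason is that a unary polynomial of $\alg A^n$ may use different constants in different coordinates; making the skew congruence generated by $\{\la\la a,a\ra,\la b,b\ra\ra : a\mathrel\theta b\}$ meet the projection kernels trivially therefore requires the centrality condition $\op C(\theta,1;0)$, whereas abelianness only gives $\op C(\theta,\theta;0)$. The same issue infects your opening claim that $I[0,\theta]\iso\op{Sub}(M)$: what is available in general is an embedding of the interval into a submodule lattice, and an embedding in that direction is useless here, since you need submodule lattices to sit \emph{inside} congruence lattices, not the reverse.

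The paper's proof is built precisely to dodge this. It never works in the full power: after replacing $\alg A$ by a subdirectly irreducible quotient with abelian monolith $\alpha$ (the analogue of your passage to the simple subquotient $S$), it forms the subalgebra $\alg A^n(\alpha)\le\alg A^n$ of pairwise $\alpha$-related tuples. There every unary polynomial has coordinatewise $\alpha$-related constants, so $\op C(\alpha,\alpha;0)$ is exactly what is needed to prove that the skew congruence $\Delta$ satisfies $\Delta\join\eta_i=\overline\alpha$ and $\Delta\meet\eta_i=0$ (Kearnes--Kiss, Claim 3.25); in the $S_3$ example this is the observation that the diagonal of $A_3\times A_3$ \emph{is} normal in the fiber product $\{(g,h):gh^{-1}\in A_3\}$. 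The paper then avoids module coordinatization altogether: the resulting $\alg M_3$'s make the complemented modular lattice $\alg L_n=I[0,\overline\alpha]$ simple of length $n$, the classical Artin--Birkhoff--Frink theory coordinatizes it by a skew field $\alg F$, and the independence of $\alg F$ from $n$ comes from $\alg L_{n-1}$ sitting as a principal filter in $\alg L_n$. To rescue your version you would have to replace $\alg A^n$ by $\alg A^n(\theta)$ and supply the $\Delta$ computation, at which point the two arguments essentially coincide.
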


\begin{proof}
A variety is called \emph{congruence neutral} if $[\alpha,\beta] = \alpha\meet\beta$ holds in every
algebra in the variety. Equivalently  $[\alpha,\alpha] = \alpha$. 
 By \cite[Corollary 4.7]{KearnesSzendrei1998}, see also \cite[Theorem 11.37]{FreeseMcKenzieMcNultyTaylor2022:a},
 a variety is congruence neutral if and only if it is congruence meet semidistributive.
 Since we are assuming $\mathscr V$ is not congruence meet semidistributive, there is a congruence
 $\alpha $ on an algebra $\alg A\in\mathscr V$ with $[\alpha,\alpha] < \alpha$.
By \cite[Theorem 6.2]{CrawleyDilworth1973}
there is an element $\psi$  of $\op{Con}(\alg A)$ with $[\alpha,\alpha] \le \psi$
 but $\alpha\not\le \psi$, and such that 
 $\psi$ is completely meet irreducible with unique cover $ \psi\join\alpha$. By Theorem~\ref{wdt}(i)
 the interval $I[\psi,\psi\join\alpha]$ is abelian.  
 So, using the basic properties of the commutator (\cite[Lemma 11.4(viii)]{FreeseMcKenzieMcNultyTaylor2022:a}),
 $\alg A/\psi$ is subdirectly irreducible with an abelian monolith. 
 
 Changing notation, we may assume there is a subdirectly irreducible algebra $\alg A\in\mathscr V$ with
 an abelian monolith $\alpha$. Let $\alg A^n(\alpha)$ be the subalgebra of $\alg A^{n}$ with universe
 \[
 \{\la a_0, a_1, \ldots, a_{n-1}\ra \in A^{n} : a_i\mathrel\alpha a_j \text{ for all $i$ and $j$}\}.
 \]
 When $n=2$, $\alg A^2(\alpha) = \alg A(\alpha)$ which is described on pages 96--97
 in \cite{FreeseMcKenzieMcNultyTaylor2022}.
 Let $\overline\alpha \in\op{Con}(\alg A^n(\alpha))$ be such that
 $\la a_0,\ldots,a_{n-1}\ra \mathrel{\overline\alpha} \la b_0,\ldots,b_{n-1}\ra$  provided all these elements 
 are $\alpha$ related.
 Let $\eta_i$ be the kernel of the  $i^{\text{th}}$ projection of $\alg A^n(\alpha)$ onto $\alg A$.
 Of course $\alg A^n(\alpha)/\eta_i \iso \alg A$ and under this isomorphism
$\overline \alpha/\eta_i$ corresponds to~$\alpha$. 
Since $[\alpha,\alpha] = 0$ in $\alg A$, it follows using Lemma 11.4(viii) of \cite{FreeseMcKenzieMcNultyTaylor2022:a}
that $C(\overline\alpha,\overline\alpha;\eta_i)$ holds for each~$i$. Part (v) of that Lemma  gives
$C(\overline\alpha,\overline\alpha;\bigwedge_i\eta_i) = C(\overline\alpha,\overline\alpha ; 0)$.
Hence $\overline\alpha$ is an abelian congruence of $\alg A^n(\alpha)$.  Lemma~\ref{wdt}(ii)
implies the interval $I[0,\overline \alpha]$ in $\alg{Con}(\alg A^n(\alpha))$ is a modular lattice.
Let $\alg L_{n}$ denote this lattice.
Since $\eta_i\cov \overline\alpha$, $\alg L_{n}$ has length $n$ and its least element
is the meet of its coatoms. 
By (4.3) of \cite{CrawleyDilworth1973}, each $\alg L_n$ is a complemented modular lattice.

For $i\ne j$ the interval $I[\eta_i\meet\eta_j,\overline\alpha]$ has length 2. 
We claim this interval contains an element which is a complement of $\eta_i$
and of $\eta_j$ and so contains a sublattice isomorphic to $\alg M_3$.
We can prove this with $n=2$ (so $\alg A^2(\alpha) = \alg A(\alpha)$)
and then apply the Correspondence 
Theorem, \cite[Theorem 4.12]{McKenzieMcNultyTaylor1987}.
Let $\Delta$ be the congruence on $\alg A(\alpha)$ generated by 
\[
\{\la\la a,a\ra, \la b,b\ra\ra : a\mathrel\alpha b\}.
\]
Easy element-wise calculations show $\Delta\join\eta_i = \overline\alpha$,
$i = 0, 1$. Using the weak difference term it is not hard to show
$\Delta\meet\eta_i = \eta_0\meet\eta_1 = 0$. 
But actually these properties of $\Delta$ can be proved with the weaker
assumption that $\mathscr V$ has a Taylor term, as was shown
by Kearnes and Kiss; see 
\cite[Claim 3.25]{KearnesKiss2013}.

By elementary modular lattice theory the existence of these $\alg M_3$'s force
$\alg L_n$ to be simple. Classical coordination theorems of 
Artin \cite{Artin1940}, Birkhoff \cite{Birkhoff1948} and 
Frink \cite{Frink1946}, see Chapter~13 of 
\cite{CrawleyDilworth1973} and also \cite{JonssonMonk1969},
show that, for $n \ge 4$, $\alg L_n$ is isomorphic to the lattice of subspaces
of a vector space over a skew field~$\mathbf F$. 
Moreover, since $\alg L_{n-1}$ is isomorphic to  an interval (actually to a principal filter with 
generator $\eta_0\meet\eta_1\meet\cdots\meet\eta_{n-2}$) in $\alg L_n$ by the Correspondence Theorem,
the vector space for $\alg L_{n-1}$ is a subspace of the one for $\alg L_n$.
This implies the skew field $\alg F$ is the same for all~$n$.

If $\mathbf P$ is the prime subfield of $\mathbf F$ then the lattice
of subspaces of an $n$-dimensional vector space over $\mathbf P$
can be embedded 
via a cover-preserving map 
into the  lattice
of subspaces of an $n$-dimensional vector space over~$\mathbf F$.
This completes the proof of Theorem~\ref{thm:embedding}.
\end{proof}

\begin{remark}
\begin{enumerate}
\item
The embedding of Theorem~\ref{thm:embedding} can be assumed to be cover-preserving, as the proof shows.
\item
In \cite{FreeseHerrmannHuhn1981} it is shown that the congruence varieties $\VV\, \alg {Con}(\mathscr M_p)$,
$p$ a prime or $0$, associated with the variety of vector spaces over a prime field are minimal modular congruence
varieties and every nondistributive modular congruence variety contains one of these. Theorem~\ref{thm:embedding}
gives a clearer understanding of the situation. The minimal nondistributive congruence varieties 
are these and $\VV\, \alg {Con}(\mathscr P)$, the
congruence variety of Polin's variety; see \cite{DayFreese1980}.
If $\mathscr V$ is not congruence meet semidistributive, then its congruence variety contains
$\VV\, \alg {Con}(\mathscr M_p)$ for some~$p$.  If $\mathscr V$ is not congruence modular,
then it contains  $\VV\, \alg {Con}(\mathscr P)$. 
By Remark~\ref{rem:intersection} below the intersection of any two of the $\VV\, \alg{Con}(\mathscr M_p)$'s is
the variety of modular, 2-distributive lattices.
By the remarks of this paragraph,
this variety is not a congruence variety, so the intersection of two congruence
varieties need not be a congruence variety. On the othe hand, the join of two congruence
varieties is a congruence variety.
\end{enumerate}

\end{remark}

Recall that $\mathscr M_{\alg F}$ is 
the variety of vector spaces over $\alg F$ and
$\mathscr M_{\alg F}^\text{fd}$ is the class of all finite dimensional vector spaces
over $\alg F$.

If $\alg P$ is the prime field of characteristic $p$, a prime or 0, we let
$\mathscr M_p = \mathscr M_{\alg P}$
and $\mathscr M_p^\text{fd} = \mathscr M_{\alg P}^\text{fd}$.
Let
\[
\mathscr K_\infty =  \bigcap_{\text{$p$ a prime or $0$}} \mathscr \SS\,\alg{Con}(\mathscr M_p^\text{fd}).
\]
be the class of all  modular lattices that, for each $p$ a prime or $0$,
can be 
embedded into the lattice of subspaces of some finite dimensional
vector space over the prime field of characteristic~$p$.

\begin{corollary}\label{cor:k-infinity}
If\/ $\mathscr V$ is a variety with a weak difference term but which is not
congruence meet semidistributive, then
 $\mathscr K_\infty \subseteq \SS\, \alg{Con}(\mathscr V)$.\qed
\end{corollary}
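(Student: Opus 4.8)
The plan is to deduce Corollary~\ref{cor:k-infinity} directly from Theorem~\ref{thm:embedding} by unwinding the definition of $\mathscr K_\infty$. The corollary asserts that if $\mathscr V$ has a weak difference term but is not congruence meet semidistributive, then every lattice in $\mathscr K_\infty$ lies in $\SS\,\alg{Con}(\mathscr V)$. Since $\mathscr K_\infty = \bigcap_p \SS\,\alg{Con}(\mathscr M_p^{\text{fd}})$, the whole content is to show that $\SS\,\alg{Con}(\mathscr M_p^{\text{fd}}) \subseteq \SS\,\alg{Con}(\mathscr V)$ for \emph{at least one} prime or zero~$p$ — namely the one singled out by Theorem~\ref{thm:embedding}.

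First I would apply Theorem~\ref{thm:embedding} to obtain a prime field $\alg P$ (of some characteristic $p$, a prime or $0$) such that for every finite $n$ the lattice of subspaces of an $n$-dimensional vector space over $\alg P$ lies in $\SS\,\alg{Con}(\mathscr V)$. In the notation $\mathscr M_p = \mathscr M_{\alg P}$, this says precisely that every member of $\alg{Con}(\mathscr M_p^{\text{fd}})$ — each such lattice being the subspace lattice of a finite dimensional vector space over $\alg P$ — belongs to $\SS\,\alg{Con}(\mathscr V)$. Hence $\alg{Con}(\mathscr M_p^{\text{fd}}) \subseteq \SS\,\alg{Con}(\mathscr V)$.

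Next I would close up under $\SS$. The class $\SS\,\alg{Con}(\mathscr V)$ is closed under taking sublattices (it is an $\SS$-closed class by construction), so from $\alg{Con}(\mathscr M_p^{\text{fd}}) \subseteq \SS\,\alg{Con}(\mathscr V)$ we may apply $\SS$ to both sides and use $\SS\,\SS = \SS$ to conclude $\SS\,\alg{Con}(\mathscr M_p^{\text{fd}}) \subseteq \SS\,\alg{Con}(\mathscr V)$. Finally, since $\mathscr K_\infty$ is the intersection over \emph{all} primes and $0$ of the classes $\SS\,\alg{Con}(\mathscr M_q^{\text{fd}})$, it is contained in the single term indexed by our chosen $p$; that is, $\mathscr K_\infty \subseteq \SS\,\alg{Con}(\mathscr M_p^{\text{fd}}) \subseteq \SS\,\alg{Con}(\mathscr V)$, which is the desired conclusion.

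There is essentially no obstacle here: the proof is a bookkeeping exercise in the definitions, and all the genuine work resides in Theorem~\ref{thm:embedding}. The only point that deserves a word of care is the direction of the intersection — one must note that membership in the \emph{intersection} $\mathscr K_\infty$ forces membership in each individual factor, so it suffices to handle the one factor corresponding to the characteristic produced by the embedding theorem rather than all characteristics at once. This is exactly why the ``for some $p$'' conclusion of Theorem~\ref{thm:embedding} is strong enough to yield the ``for every lattice in $\mathscr K_\infty$'' conclusion of the corollary.
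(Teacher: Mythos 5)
Your proof is correct and is exactly the argument the paper intends (the corollary is stated with a \qed precisely because it follows from Theorem~\ref{thm:embedding} by this bookkeeping): the theorem supplies one characteristic $p$ with $\alg{Con}(\mathscr M_p^{\text{fd}})\subseteq \SS\,\alg{Con}(\mathscr V)$, hence $\SS\,\alg{Con}(\mathscr M_p^{\text{fd}})\subseteq \SS\,\alg{Con}(\mathscr V)$ by $\SS\SS=\SS$, and $\mathscr K_\infty$ is contained in that single factor of the defining intersection. You also correctly flag the one subtle point, namely that the ``for some $p$'' conclusion of the embedding theorem suffices because $\mathscr K_\infty$ is an intersection over all characteristics.
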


$\mathscr K_\infty$ is a broad class that includes most modular lattices that have ever been pictured. 
Two members of this class are drawn in Figure~\ref{fig:examples}.
We present a nice  characterization of this class  due primarily  to C.~Herrmann, A.~Huhn, J.~B.\ Nation
and B.~J\'onsson. A lattice is 2-{\it distributive} if is satisfies the equation:
\begin{equation}\label{eq:2dist}
u\meet(x \join y \join z)  \approx (u\meet(x \join y)) 
\join (u\meet(x \join z))
\join (u\meet(y \join z)).
\end{equation}

\begin{figure}
\centering{
\begin{tikzpicture}[shorten >=1pt,node distance=2pt,inner
sep=1.5pt,auto,scale=0.7]
\node [circle,draw] (0) at (0,0){};
\node [circle,draw] (a) at (-1,1){};
\node [circle,draw] (b) at (0,1){};
\node [circle,draw] (c) at (1,1){};
\node [circle,draw] (bb) at (0,2){};

\draw (0)--(c)--(bb);
\draw (0)--(a)--(bb);
\draw (0)--(b)--(bb);

\node [circle,draw] (cc) at (1,2){};
\node [circle,draw] (e) at (2,2){};
\node [circle,draw] (f) at (1,3){};

\draw (c)--(cc)--(f);
\draw (c)--(e)--(f);
\draw (bb)--(f);

\node [circle,draw] (-22) at (-2,2){};
\node [circle,draw] (-12) at (-1,2){};
\node [circle,draw] (-13) at (-1,3){};
\node [circle,draw] (03) at (0,3){};
\node [circle,draw] (04) at (0,4){};

\draw (a)--(-22)--(-13)--(04)--(f);
\draw (bb)--(-13);
\draw (bb)--(03)--(04);


\node [circle,draw] (-12) at (-1,2){};
\node [circle,draw] (03) at (0,3){};
\node [circle,draw] (b) at (0,1){};

\draw (b)--(-12)--(03);

\color{black}
\node [circle,draw] (33) at (3,3){};
\node [circle,draw] (24) at (2,4){};
\node [circle,draw] (15) at (1,5){};
\node [circle,draw] (23) at (2,3){};
\node [circle,draw] (14) at (1,4){};
\draw (e)--(33)--(24)--(15)--(04);
\draw (24)--(f)--(14)--(15);
\draw (cc)--(23)--(14);

\node [circle,draw] (-33) at (-3,3){};
\node [circle,draw] (-24) at (-2,4){};
\node [circle,draw] (-15) at (-1,5){};
\node [circle,draw] (06) at (0,6){};
\node [circle,draw] (05) at (0,5){};
\node [circle,draw] (-14) at (-1,4){};
\node [circle,draw] (-23) at (-2,3){};

\draw (-22)--(-33)--(-24)--(-15)--(06)--(15);
\draw (06)--(05)--(04)--(-15);
\draw (-13)--(-24);
\draw (-12)--(-23)--(-14)--(05);
\draw (03)--(-14);

\node [circle,draw] (53) at (5,3){};
\node [circle,draw] (63) at (6,3){};
\node [circle,draw] (73) at (7,3){};
\node [circle,draw] (553) at (5.5,3){};
\node [circle,draw] (653) at (6.5,3){};
\node [circle,draw] (62) at (6,2){};
\node [circle,draw] (64) at (6,4){};

\draw (62)--(53)--(64);
\draw (62)--(63)--(64);
\draw (62)--(553)--(64);
\draw (62)--(653)--(64);
\draw (62)--(73)--(64);

\end{tikzpicture}
}
\caption{Two members of $\mathscr K_\infty$}\label{fig:examples}
\end{figure}
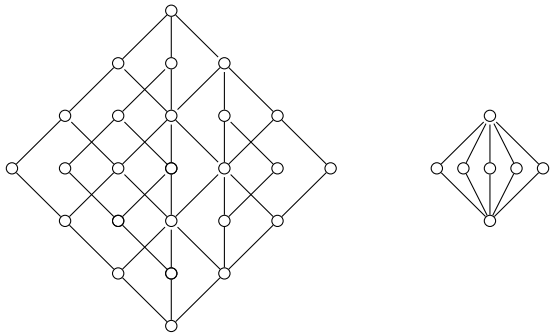

\begin{theorem}
 $\mathscr K_\infty$ is the class of all finite modular 2-distributive lattices.
\end{theorem}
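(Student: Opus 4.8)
The plan is to prove a set equality $\mathscr K_\infty = \{\text{finite modular 2-distributive lattices}\}$ by two inclusions. First I would establish that every lattice in $\mathscr K_\infty$ is finite, modular, and 2-distributive; then I would show conversely that every finite modular 2-distributive lattice embeds into the subspace lattice of a finite dimensional vector space over every prime field, and hence lies in $\mathscr K_\infty$.

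\textbf{The forward inclusion.} Modularity is immediate since $\mathscr K_\infty$ is by definition a class of modular lattices (each $\SS\,\alg{Con}(\mathscr M_p^{\text{fd}})$ consists of sublattices of subspace lattices of finite dimensional vector spaces, which are modular, and modularity is inherited by sublattices). For 2-distributivity, the key point is that the subspace lattice of \emph{any} vector space is 2-distributive: this is a classical fact, and the equation \eqref{eq:2dist} is preserved under taking sublattices, so every member of $\SS\,\alg{Con}(\mathscr M_p^{\text{fd}})$ is 2-distributive, and in particular so is every member of the intersection $\mathscr K_\infty$. Finiteness requires an argument: a lattice in $\mathscr K_\infty$ embeds, for each $p$, into a \emph{finite dimensional} subspace lattice, but a priori such a lattice could still be infinite (over an infinite prime field the subspace lattice of a fixed finite dimension is infinite). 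Here I would use that a modular 2-distributive lattice of finite length is finite, together with the fact that embedding into a finite dimensional subspace lattice bounds the length; I expect one must argue that the relevant lattices have finite length and that finite length plus 2-distributivity plus modularity forces finiteness.

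\textbf{The reverse inclusion.} This is the substantive direction. I would take a finite modular 2-distributive lattice $\alg L$ and, fixing a prime $p$ or $0$ with prime field $\alg P$, produce an embedding of $\alg L$ into the subspace lattice of some finite dimensional $\alg P$-vector space. The natural route is through the structure theory of modular lattices attributed in the statement to Herrmann, Huhn, Nation, and J\'onsson: 2-distributivity is precisely the condition that rules out the ``higher'' projective geometries, so that a finite modular 2-distributive lattice decomposes into pieces of geometric dimension at most $2$, which can be coordinatized over any field of any characteristic. Concretely, the 2-distributive law forces every interval of the form of a projective geometry to have dimension $\le 2$ (a projective plane or a line), and planes and lines over the prime field of \emph{every} characteristic are available simultaneously; one assembles these local embeddings into a global embedding of $\alg L$.

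\textbf{The main obstacle.} The hard part will be the reverse inclusion, and specifically extracting from 2-distributivity the geometric statement that $\alg L$ contains no irreducible projective geometry of dimension $\ge 3$, and then combining the resulting low-dimensional coordinatizations into a single embedding valid over \emph{all} prime fields at once. The delicate feature is the uniformity in $p$: a finite modular lattice might embed into subspace lattices over some fields but not others, and 2-distributivity is exactly the equational condition guaranteeing embeddability over every characteristic. I would lean on the cited representation theory of Herrmann and Huhn for 2-distributive modular lattices to supply this uniform coordinatization, reducing the problem to verifying that the only geometric building blocks that survive are lines and planes, which coordinatize over every prime field. The finiteness direction of the forward inclusion is a secondary but genuine point that I would not want to gloss over, since it is what makes the class coincide with \emph{finite} lattices rather than merely 2-distributive modular ones of finite length.
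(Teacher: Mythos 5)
Your first inclusion ($\mathscr K_\infty$ is contained in the finite modular $2$-distributive lattices) rests on a false premise. The subspace lattice of a vector space of dimension $\ge 3$ is \emph{not} $2$-distributive: taking $x,y,z$ to be the three coordinate axes of $\alg F^3$ and $u$ the line spanned by $(1,1,1)$, the left side of \eqref{eq:2dist} is $u$ while the right side is $0$. So $2$-distributivity is certainly not inherited from the ambient subspace lattices; each individual class $\SS\,\alg{Con}(\mathscr M_p^{\text{fd}})$ contains many non-$2$-distributive lattices, and the $2$-distributivity of members of $\mathscr K_\infty$ genuinely depends on intersecting over \emph{all} characteristics. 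The paper's argument is: if $\alg L\in\mathscr K_\infty$ were not $2$-distributive it would contain a $2$-diamond, equivalently a $3$-frame, by Huhn's theorem; a $3$-frame lying in a subspace lattice over characteristic $p$ has characteristic $p$ in the sense of Herrmann--Huhn and Freese; since $\alg L$ embeds over two different characteristics the frame would have two distinct characteristics, which is impossible. Your finiteness argument is also broken: ``finite length plus modular plus $2$-distributive'' does not force finiteness --- the length-two lattice with countably many atoms is $2$-distributive and is exactly the subspace lattice of $\mathbb{Q}^2$. Finiteness is instead immediate from any single positive characteristic: for $p$ prime the subspace lattice of a finite dimensional vector space over $\mathbb{F}_p$ is a finite lattice, and sublattices of finite lattices are finite.

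Your second inclusion is the right idea but remains a pointer to the literature; the paper makes it precise by invoking the J\'onsson--Nation theorem that a finite modular $2$-distributive lattice $\alg L$ embeds cover-preservingly into the subspace lattice of a vector space over any field $\alg F$ with $|\alg F|>|\alg L|$. Two points you gloss over actually matter: the cover-preserving property is what allows one to cut down to a \emph{finite dimensional} space, and for positive characteristic the prime field may be too small for that theorem to apply directly, so one embeds over a large finite field of characteristic $p$ and then passes to the reduct over its prime subfield, which keeps the dimension finite. As it stands, the proposal's first inclusion would not survive being written out, and the second needs these repairs.
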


\begin{proof}
 Suppose that $\alg L$ is a finite, modular 2-distributive lattice.
 In \cite{JonssonNation1987} J\'onsson and Nation show that, for every field $\alg F$, 
 $\alg L$ can be embedded via a covering preserving embedding 
  into the lattice of subspaces of an $\alg F$-vector space,
 provided $|\alg F| > |\alg L|$.
 Since the embedding is cover preserving and $\alg L$  is finite, we may assume these
 vector spaces are finite dimensional. This shows $\alg L\in \SS\,\alg{Con}(\mathscr M_p^{\text{fd}})$
 if $p=0$ (or if $p > |\alg L|$).
 For arbitrary  $p > 0$ we can choose a finite field $\alg F$ of characteristic $p$ with $|\alg F| > |\alg L|$.
 Let $\alg P$ be the prime subfield of $\alg F$. 
 Then an $\alg F$-vector space ${}_{\alg F}\alg V$
 has a reduct to
 a $\alg P$-vector space ${}_{\alg P}\alg V$ by restricting to scalar multiplication by elements of $\alg P$.
 Of course  $\dim {}_{\alg P}\alg V = \dim {}_{\alg F}\alg V \cdot \dim {}_{\alg P}\alg V$ and so is finite.
 This proves the reverse containment.

To see the opposite containment, let $\alg P$ and $\alg Q$ be prime fields of characteristics $p$ and
$q$, where $p\neq q$ are prime or $0$.  Suppose $\alg L\in\mathscr K_\infty$. Then there are
vector spaces ${}_{\alg P}\alg V$ and ${}_{\alg Q}\alg U$ such that $\alg L$ can be embedded into
the  lattice of subspaces of each.
If $\alg L$ is not $2$-distributive it contains a $2$-diamond (equivalently a $3$-frame) by
Huhn's Theorem \cite{Huhn1972}. Since $\alg L$  embeds into the subspace
lattice of ${}_{\alg P}\alg V$, this $2$-diamond ($3$-frame) has characteristic $p$, as 
defined in \cite{Freese1979:a} and \cite{HerrmannHuhn1975:a}. Since this holds for
${}_{\alg Q}\alg U$ as well, the frame has characteristic $q$ as well. 
But, as is shown in  \cite{Freese1979:a} and 
\cite{HerrmannHuhn1975:a}, this is impossible.
This contradiction completes the proof.
\end{proof}

\begin{remark}\label{rem:intersection}
 In \cite{HerrmannPickeringRoddy1994}    Herrmann, Pickering and Roddy generalized the
 result of J\'onsson and Nation  \cite{JonssonNation1987} by showing that, for every field $\alg F$, every modular
 $2$-distributive lattice can   be embedded into a vector space lattice over $\alg F$.  This, together with
 the projectivity of $n$-frames of characteristic $p$ \cite[Theorem 1.6]{Freese1979:a}, shows that the congruence 
 varieties associated with $\mathscr M_p$ and with $\mathscr M_q$, $p\ne q$,  intersect to the variety of 
 modular, $2$-distributive lattices. 
 Of course 
 the intersection of any modular variety of lattices with a semidistributive variety of lattices is distributive.
 In particular $\VV\,\alg{Con}(\mathscr M_p) \cap \VV\,\alg{Con}(\mathscr P)$, $p$ a prime or~$0$,
 is the variety of distributive lattices.

\end{remark}

\begin{remark}
In a recent paper \cite{AglianoBartaliFioravanti2023} Agliano, Bartali and Fioravanti show 
in their Theorem~1.4
that, if $\mathscr V$ is a 
 variety having a member whose congruence lattice has $\alg M_3$ as a sublattice such that the 
 congruences permute, 
 then $\SS\, \alg{Con}(\mathscr V)$ contains rods and snakes, which roughly
 are finite stacks of $\alg M_3$'s glued over one-dimensional intervals. $\alg M_{3,3}$ is an example.
 Moreover, in their concluding remarks the authors refer to a private communication by Keith Kearnes 
 that the hypothesis of their Theorem~1.4 is satisfied if $\mathscr V$ 
 is not congruence meet semidistributive.
 This result follows from  Corollary~\ref{cor:k-infinity}  but under the additional assumption
 that $\mathscr V$ has a weak difference term, and so it is a strengthening at least 
 regarding rods and snakes. On the other hand both the  lattices of Figure~\ref{fig:examples}
 lie in $\mathscr K_\infty$. One wonders if they lie in $\SS\,\alg {Con}(\mathscr V)$ whenever
 $\mathscr V$ is not congruence  meet semidistributive.
 \end{remark}

\section{Proofs of Theorem~\ref{thm:main}($\op{2}'$) and Theorem~\ref{thm:fbcv}}

We begin with a folklore result connecting the equational theory of algebraic lattices to that
of their compact elements.

\begin{lemma}
 An algebraic lattice satisfies a lattice equation if and only this equation holds 
 whenever compact elements are substituted for the variables.
\end{lemma}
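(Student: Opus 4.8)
The plan is to prove the two directions of the biconditional separately, noting that one direction is trivial and the other uses the defining property of algebraic lattices. Let $\alg L$ be an algebraic lattice and let $p \approx q$ be a lattice equation in variables $x_1, \ldots, x_k$.

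First I would dispose of the forward direction: if $\alg L$ satisfies $p \approx q$ for all substitutions, then in particular it holds when compact elements are substituted, since compact elements are elements of $\alg L$. This requires no special structure.

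For the reverse direction, suppose $p \approx q$ holds whenever compact elements are substituted, and let $a_1, \ldots, a_k \in \alg L$ be arbitrary. The key structural fact about an algebraic lattice is that every element is the join of the compact elements below it; equivalently, it is the directed join of such compact elements. I would fix this notation: write $a_i = \bigvee D_i$ where $D_i$ is the directed set of compact elements $\le a_i$. The crucial observation is that lattice polynomial operations, being built from $\join$ and $\meet$, are continuous with respect to directed joins in an algebraic lattice --- both $\join$ and $\meet$ commute with directed joins (join trivially, and meet because algebraic lattices are meet-continuous, as they are complete lattices in which meet distributes over directed joins). Hence $p(a_1, \ldots, a_k) = p(\bigvee D_1, \ldots, \bigvee D_k)$ equals the directed join $\bigvee \{ p(c_1, \ldots, c_k) : c_i \in D_i \}$ over the product of the directed sets, and similarly for $q$. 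By hypothesis $p(c_1, \ldots, c_k) = q(c_1, \ldots, c_k)$ for every choice of compact $c_i$, so the two directed joins are equal term-by-term, giving $p(a_1, \ldots, a_k) = q(a_1, \ldots, a_k)$.

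The main obstacle is the continuity claim, namely that lattice term operations preserve directed joins. For joins this is immediate, but for meets one must invoke that an algebraic lattice is meet-continuous: $a \meet \bigvee^{\uparrow} D = \bigvee^{\uparrow}_{d \in D}(a \meet d)$ for directed $D$. One then proceeds by induction on the structure of the term $p$, with the meet case using meet-continuity together with the fact that a directed join of directed joins over a product is again a directed join. I would state meet-continuity of algebraic lattices as a known fact (it is standard, e.g.\ from the definition via directed joins of compact elements), and the induction itself is routine bookkeeping that I would only sketch.
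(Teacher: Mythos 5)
Your proof is correct. Both directions are handled properly: the forward direction is indeed trivial, and for the converse the facts you rely on are standard --- every algebraic lattice is meet-continuous, and the induction showing that lattice term operations preserve up-directed joins is routine given that, the only real bookkeeping being that the image of a product of directed sets under a monotone term function is again directed.

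Your route differs from the paper's mainly in packaging, though the underlying idea (approximating term values by their values at compact arguments below) is the same. The paper does not prove the full continuity equality $t(a_0,\ldots,a_{n-1})=\bigvee\{t(c_0,\ldots,c_{n-1}): c_i \text{ compact},\ c_i\le a_i\}$; it proves only the half that is needed, namely that every compact $c\le t(a_0,\ldots,a_{n-1})$ satisfies $c\le t(c_0,\ldots,c_{n-1})$ for suitable compact $c_i\le a_i$. That interpolation statement is obtained by induction on $t$ using only the definition of compactness (via the representation of $\alg L$ as the ideal lattice of its join-semilattice of compact elements, a compact element below a join of ideals is below a finite join of compact generators), so meet-continuity is never invoked as a separate ingredient: the meet case of the induction is handled simply by joining the witnesses produced for the two subterms. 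The paper then concludes by contradiction, choosing a compact element witnessing $s(\bar a)\nleq t(\bar a)$ and pushing it down to compact arguments. Your version buys the cleaner global statement that term functions are continuous with respect to directed joins, at the cost of one extra (standard) structural fact about algebraic lattices; the paper's version is marginally more elementary and self-contained. Both are sound.
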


\begin{proof}
This lemma implies that the ideal lattice of a lattice satisfies the same 
identities as the lattice and its proof is essentially the same.  But nevertheless
we give an outline.
Let $\alg L$ be an algebraic lattice and let $\alg L^{\text{c}}$ be the join subsemilattice of 
compact elements of $\alg L$. It is well known that  $\alg L$ is isomorphic to the lattice 
of ideals of $\alg L^c$ (join closed down-sets). It follows that if $c \le a_0 \join a_1$, with
$c$ compact, then $c\le c_0 \join c_1$ with $c_i$ compact and $c_i \le a_i$, $i =  0,1$.
More generally, an inductive argument on the complexity of a term $t$, shows that 
\[
\text{if $a_i\in L$,  $c$ is compact and }
c\le t(a_0,\ldots,a_{n-1}), \text{ then } c\le t(c_0,\ldots,c_{n-1})
\]
for some compact elements $c_i \le a_i$.

To prove the lemma suppose $s(x_0,\ldots,x_{n-1}) \approx t(x_0,\ldots,x_{n-1})$
holds for compact elements but fails in general.  
 Then, by symmetry, we may assume  there exist $a_i$'s such that $s(a_0,\ldots,a_{n-1}) \nleq t(a_0,\ldots,a_{n-1})$ in $\alg L$.
Consequently there is a compact element  $c$ with 
\begin{equation}\label{eq:compact}
c \leq s(a_0,\ldots,a_{n-1}), \text{ but } c \nleq t(a_0,\ldots,a_{n-1})
\end{equation}
Now by the  previous paragraph 
there are compact elements  $c_i\le a_i$ with $c \leq s(c_0,\ldots,c_{n-1})$.
But then
\[
c \leq s(c_0,\ldots,c_{n-1}) = t(c_0,\ldots,c_{n-1}) \leq t(a_0,\ldots,a_{n-1}),
\]
since $s = t$ holds for compact elements and lattice term functions are order-preserving.
This contradicts \eqref{eq:compact} and so proves the lemma.
\end{proof}

\begin{lemma}\label{lem:fdgenerate}
Let $\alg F$ and $\alg K$ be skew fields. 
\begin{enumerate}[\quad \normalfont(1)]
 \item
 The variety of lattices generated 
by $\alg{Con}(\mathscr M^{\text{\textup{fd}}}_{\alg F})$ equals 
the congruence variety of $\mathscr M_{\alg F}$. 
That is,
 \[
 \VV\, \alg{Con}(\mathscr M^{\text{\textup{fd}}}_{\alg F}) 
    = \VV\,\alg{Con}(\mathscr M_{\alg F}).
 \]
 \item
 $\alg F$ and $\alg K$ have the same characteristic if and only if
 \[
  \SS\,\alg{Con}(\mathscr M_{\alg F}) =  \SS\,\alg{Con}(\mathscr M_{\alg K}) .
 \]
 Moreover, if $\alg F$ and $\alg K$ have  different characteristics then
 \[
 \VV\,\alg{Con}(\mathscr M_{\alg F}) \nsubseteq  \VV\,\alg{Con}(\mathscr M_{\alg K}) .
 \]
 \end{enumerate}
\end{lemma}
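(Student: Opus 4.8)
The plan is to prove the two parts separately, drawing on classical coordination theory for the subspace lattices of vector spaces.

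For part~(1), the containment $\VV\,\alg{Con}(\mathscr M^{\text{fd}}_{\alg F}) \subseteq \VV\,\alg{Con}(\mathscr M_{\alg F})$ is trivial since $\mathscr M^{\text{fd}}_{\alg F}\subseteq \mathscr M_{\alg F}$. For the reverse, I would use the fact that a lattice identity fails in $\alg{Con}(\mathbf V)$ for some $\mathbf V\in\mathscr M_{\alg F}$ only if it fails on finitely many elements, hence on a finitely generated sublattice of the subspace lattice. The key point is that the subspace lattice of an arbitrary $\alg F$-vector space is a directed union of the subspace lattices of its finite-dimensional subspaces, so any failure of an identity localizes to a finite-dimensional subspace. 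Concretely, if an equation $s\approx t$ fails at subspaces $U_0,\dots,U_{n-1}$ of $\mathbf V$, then because lattice terms involve only finitely many joins and meets, the witnessing elements all lie in the subspace lattice of a finite-dimensional subspace $\mathbf W$ containing finitely many spanning vectors for the relevant subspaces, and that lattice embeds into $\alg{Con}(\mathbf W)$ with $\mathbf W\in\mathscr M^{\text{fd}}_{\alg F}$. Thus every identity holding throughout $\alg{Con}(\mathscr M^{\text{fd}}_{\alg F})$ holds throughout $\alg{Con}(\mathscr M_{\alg F})$, giving equality of the generated varieties.

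For part~(1) the one technical care needed is that meets of subspaces of $\mathbf V$ restricted to $\mathbf W$ agree with meets computed in $\mathbf W$; for subspaces this is immediate, and joins are spans, which are also computed correctly once $\mathbf W$ is chosen large enough to contain the spanning sets. I would phrase this as a localization argument and invoke the directed-union structure.

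For part~(2), the forward direction of the characterization is the easier half: if $\alg F$ and $\alg K$ have the same characteristic, they share a prime subfield $\alg P$, and the subspace lattice of any $\alg F$-vector space embeds into that of a $\alg P$-vector space by restriction of scalars (as used in the proof of Theorem~\ref{thm:embedding} and in Lemma~\ref{lem:HaimanLats}(4)). Thus $\SS\,\alg{Con}(\mathscr M_{\alg F}) = \SS\,\alg{Con}(\mathscr M_{\alg P}) = \SS\,\alg{Con}(\mathscr M_{\alg K})$, the middle equality again following from restriction of scalars in the reverse direction since a $\alg P$-space is a fortiori captured by any extension. \emph{The main obstacle} is the converse and the final non-containment assertion: I must show that different characteristics force genuinely different equational behavior. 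Here I would invoke the theory of frames of a given characteristic, as developed in \cite{Freese1979:a} and \cite{HerrmannHuhn1975:a}. The subspace lattice of an $\alg F$-vector space of dimension at least $3$ contains an $n$-frame of characteristic $p = \operatorname{char}\alg F$, and the statement that a frame has a particular characteristic is expressible by lattice identities (equivalently, by the failure of the relevant von Neumann coordinatization relations). Since a frame cannot simultaneously have characteristic $p$ and characteristic $q$ for distinct primes (the contradiction exploited at the end of the proof of the preceding theorem), an identity witnessing characteristic $p$ will hold in $\alg{Con}(\mathscr M_{\alg F})$ but fail in $\alg{Con}(\mathscr M_{\alg K})$ when $\operatorname{char}\alg K = q\neq p$. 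This yields both $\SS\,\alg{Con}(\mathscr M_{\alg F}) \neq \SS\,\alg{Con}(\mathscr M_{\alg K})$ and the stronger non-containment $\VV\,\alg{Con}(\mathscr M_{\alg F}) \nsubseteq \VV\,\alg{Con}(\mathscr M_{\alg K})$, since the separating condition is an identity and thus survives passage from $\SS$ to $\VV = \HH\SS\PP$. I expect the delicate part to be citing the precise form of the characteristic-$p$ identities and confirming they are genuine lattice identities (not merely quasi-identities), which is exactly what the frame projectivity results in \cite{Freese1979:a} provide.
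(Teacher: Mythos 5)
Your overall architecture matches the paper's, and your route to the characteristic separation in part~(2) via frames of characteristic $p$ and their projectivity (\cite{Freese1979:a}, \cite{HerrmannHuhn1975:a}) is a legitimate alternative to the paper's citation of Hutchinson and Cz\'edli \cite{HutchinsonCzedli1978}; indeed the paper uses exactly that frame argument in the neighboring results on $\mathscr K_\infty$. However, two steps as you state them have genuine gaps.

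First, in part~(1) the localization is not correct as written. The subspace lattice of an infinite-dimensional space $\mathbf V$ is \emph{not} the directed union of the subspace lattices of its finite-dimensional subspaces (for instance $\mathbf V$ itself lies in no such sublattice), and if $s\approx t$ fails at subspaces $U_0,\dots,U_{n-1}$ that are themselves infinite-dimensional, there is no finite-dimensional $\mathbf W$ containing ``spanning vectors for the relevant subspaces.'' What saves the argument is monotonicity of lattice terms together with algebraicity: a failure means (after symmetrizing) $s(U_0,\dots,U_{n-1})\nleq t(U_0,\dots,U_{n-1})$, so some compact element $c$ (a finite-dimensional subspace) satisfies $c\le s(U_0,\dots,U_{n-1})$ but $c\nleq t(U_0,\dots,U_{n-1})$; one then replaces each $U_i$ by a compact $c_i\le U_i$ with $c\le s(c_0,\dots,c_{n-1})$ still holding, and derives $c\le s(c_0,\dots,c_{n-1})=t(c_0,\dots,c_{n-1})\le t(U_0,\dots,U_{n-1})$, a contradiction. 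This is precisely the folklore lemma the paper proves just before this one (an algebraic lattice satisfies an identity iff it holds on compact elements); your proof needs that lemma, or an equivalent of it, made explicit.

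Second, in part~(2) you obtain $\SS\,\alg{Con}(\mathscr M_{\alg F})=\SS\,\alg{Con}(\mathscr M_{\alg P})$ from ``restriction of scalars in the reverse direction,'' but restriction of scalars gives only the containment $\SS\,\alg{Con}(\mathscr M_{\alg F})\subseteq\SS\,\alg{Con}(\mathscr M_{\alg P})$: a $\alg P$-vector space carries no natural $\alg F$-structure, so there is nothing to restrict. The missing containment requires \emph{extension} of scalars: the map $\alg U\mapsto \alg U\otimes_{\alg P}\alg F$ embeds $\alg{Sub}(\alg V)$ into $\alg{Sub}(\alg V\otimes_{\alg P}\alg F)$, and one must check that it preserves intersections, which is where flatness of ${}_{\alg P}\alg F$ enters; this is how the paper sketches that step. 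Your conclusion for the final non-containment is fine once the separating identities are in hand, though note that an identity holding over $\alg F$ and failing over $\alg K$ shows $\VV\,\alg{Con}(\mathscr M_{\alg K})\nsubseteq\VV\,\alg{Con}(\mathscr M_{\alg F})$; the stated direction then follows by symmetry.
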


\begin{proof}
 Let $\alg V$ be a vector space over $\alg F$ and let $\alg L =  \alg{Sub}(\alg V)$. 
 Of course, $ \alg{Con}(\alg V) \iso \alg{Sub}(\alg V)$
so $\alg L \in  \alg{Con}( \mathscr M_{\alg F})$. 
Since any finite set of compact elements of $\alg L$ is contained in $\alg W$ for
some finite dimensional vector space $\alg W$ of $\alg V$,
(1) follows from the previous lemma.

 For (2) first assume $\alg F \le \alg K$. Then 
 \begin{equation}\label{eq:containment}
 \SS\,\alg{Con}(\mathscr M_{\alg K}) \subseteq  \SS\,\alg{Con}(\mathscr M_{\alg F}).
 \end{equation}
 Indeed, if ${}_{\alg K}\!\alg V$ is a vector space over $\alg K$ then its reduct to
 $\alg F$, ${}_{\alg F}\!\alg V$, is a vector space over $\alg F$. 
 By Lemma~6.8 of \cite{FreeseMcKenzieMcNultyTaylor2022}, 
 $\alg{Con}({}_{\alg K}\!\alg V) \le \alg{Con}({}_{\alg F}\!\alg V)$, and hence
 $\alg{Con}({}_{\alg K}\!\alg V) \in \SS\,\alg{Con}(\mathscr M_{\alg F})$. 
 So \eqref{eq:containment} holds.
 
 Since  \eqref{eq:containment} is the only part of statement (2) of the lemma required
 in this paper, we will only sketch the rest of the proof.
 The opposite containment of \eqref{eq:containment},
 still assuming $\alg F \le \alg K$,
 can be proved using
 tensor  products: 
 if $\alg V$ is a vector space
 over $\alg F$, then $\alg V \otimes_{\alg F} \alg K$ is a vector space over $\alg K$
 and, since ${}_{\alg F}\!\alg K$ is flat, the map $\alg U \mapsto \alg U \otimes_{\alg F} \alg K$ 
 embeds the lattice of subspaces of $\alg V$, $\alg{Sub}(\alg V)$,
 into that of $\alg V \otimes_{\alg F} \alg K$.
 Thus, if $\alg P$ is the prime subfield of $\alg K$, we have 
 $\SS\,\alg{Con}(\mathscr M_{\alg P}) = \SS\,\alg{Con}(\mathscr M_{\alg K})$,
 and (2) easily follows.

 The last statement of (2) follows from \cite[Theorems 4 and 5]{HutchinsonCzedli1978}.
 \end{proof}

Let $\mathscr K$ be the congruence variety of a variety $\mathscr V$ and assume
$\mathscr K$ is not join semidistributive. 
To prove Theorem~\ref{thm:main}($\op{2}'$) we need to show
that there is a field $\alg F$ such that any nonprincipal 
ultraproduct of $\{\alg H_n(\alg F) : n \ge 3\}$ lies in $\mathscr K$.
This is trivial if $\mathscr K$ is the variety of all lattices so we may assume
$\mathscr V$ has a nontrivial (pure lattice) congruence identity. 
We know $\mathscr K$ is not join semidistributive  which implies $\mathscr V$
is not congruence join semidistributive, as was shown by Kearnes and Kiss in
\cite[Theorem~8.14 (1) $\Leftrightarrow$ (8)]{KearnesKiss2013}.
Now (1) $\Leftrightarrow$ (6) of that same theorem implies $\mathscr V$ is 
not congruence meet semidistributive.
Also, by the Kearnes-Szendrei result Theorem~\ref{kknontrivialid}(i) above, 
$\mathscr V$ has a weak difference term.

Theorem~\ref{thm:embedding} can
be interpreted to say that there is a prime field $\alg P$ such that
$\alg{Con}(\mathscr M^{\text{\textup{fd}}}_{\alg P})\subseteq
\SS\,\alg{Con}(\mathscr V)$.
Now by Lemma~\ref{lem:fdgenerate}(1) we have that the congruence variety of
$\mathscr V$,  namely $\mathscr K$, contains the congruence variety of $\mathscr M_{\alg P}$.
Let $\alg F$ be a field whose prime subfield is $\alg P$. (If  $\alg P$ is the two element field,
$\alg F$ should have at least 4 elements.)
By Lemma~\ref{lem:HaimanLats}(4) any nonprincipal ultraproduct
of $\{\alg H_n(\alg F) : n \ge 3\}$ lies in $\SS\, \alg{Con}(\mathscr M_{\alg P}) $
and so in $\mathscr K$.

This completes the proof of Theorem~\ref{thm:main}($\op{2}'$) and hence of Theorem~\ref{thm:main}.
\qed

To see Theorem~\ref{thm:fbcv} let 
$\mathscr V$ be a variety of algebras 
such that 
$\Con(\mathscr V)$ satisfies a nontrivial lattice identity and 
suppose $\alg{Con}(\mathscr V)$ is not semidistributive.
By the Kearnes-Kiss result cited above, this implies 
$\alg{Con}(\mathscr V)$ is not join semidistributive. 
By Theorem~\ref{thm:main}($\op{1}'$) and ($\op{2}'$)
there is a collection of lattices not in $\VV\, \alg{Con}(\mathscr V)$ 
whose ultraproduct is in. 
Consequently 
the congruence variety of $\mathscr V $ is not finitely based
by \cite[Theorem~8.52]{FreeseMcKenzieMcNultyTaylor2022}.
\qed

\subsection*{Data availability}
Data sharing not applicable to this article as datasets were neither generated nor analyzed.

\subsection*{Compliance with ethical standards}
The authors declare that they have no conflict of interest.
The first author belongs to the Editorial Board of Algebra Universalis.


\end{document}